\crefname{hypothesis}{Hypothesis}{Hypotheses}
\Crefname{ALC@unique}{Line}{Lines}
\colorlet{texcscolor}{blue!50!black}
\colorlet{texemcolor}{red!70!black}
\colorlet{texpreamble}{red!70!black}
\colorlet{codebackground}{black!25!white!25}
\lstdefinestyle{siamlatex}{%
  style=tcblatex,
  texcsstyle=*\color{texcscolor},
  texcsstyle=[2]\color{texemcolor},
  keywordstyle=[2]\color{texemcolor},
  moretexcs={cref,Cref,maketitle,mathcal,text,headers,email,url},
}
\DeclareTotalTCBox{\code}{ v O{} }
{ 
  fontupper=\ttfamily\color{black},
  nobeforeafter,
  tcbox raise base,
  colback=codebackground,colframe=white,
  top=0pt,bottom=0pt,left=0mm,right=0mm,
  leftrule=0pt,rightrule=0pt,toprule=0mm,bottomrule=0mm,
  boxsep=0.5mm,
  #2}{#1}
\patchcmd\newpage{\vfil}{}{}{}
\title{Asymptotic-preserving schemes for the initial-boundary value problem of hyperbolic relaxation systems 
}
\author{Yizhou Zhou \thanks{IGPM, RWTH Aachen University, Templergraben, 55, D-52062 Aachen, Germany (zhou@igpm.rwth-aachen.de)}}
\begin{document}
\maketitle
\begin{tcbverbatimwrite}{tmp_\jobname_abstract.tex}
\begin{abstract}
In this work, we present a numerical method for the initial-boundary value problem (IBVP) of first-order hyperbolic systems with source terms. The scheme directly solves the relaxation system using a relatively coarse mesh and captures the equilibrium behavior quite well, even in the presence of boundary layers. This method extends the concept of asymptotic-preserving schemes from initial-value problems to IBVPs. Moreover, we apply this idea to design a unified numerical scheme for the interface problem of relaxation systems.

\end{abstract}

\begin{keywords}
Asymptotic-preserving schemes, hyperbolic relaxation system, initial-boundary value problem, interface problem
\end{keywords}

\begin{MSCcodes}
35L50, 65M06, 76N20
\end{MSCcodes}
\end{tcbverbatimwrite}
\input{tmp_\jobname_abstract.tex}

\section{Introduction}

This paper is concerned with the numerical schemes for the initial-boundary value problem (IBVP) of the  first-order hyperbolic relaxation system 
\begin{eqnarray*}
\left\{
\begin{array}{l}
     U_t + A(U) U_x = \dfrac{1}{\epsilon}Q(U),\quad x>0,~t>0,\\[2mm]
    BU(0,t)= b(t),\\[2mm]
    U(x,0)= g(x).
\end{array}\right.
\end{eqnarray*}
Here $A(U)$ and $Q(U)$ are smooth functions of unknown $U=U(x,t)\in \mathbb{R}^n$, $B$ is a constant matrix, $b(t)$ and $g(x)$ represent the boundary and initial data, and $\epsilon$ is a small parameter called the relaxation time. This kind of problems occur naturally in the kinetic theory \cite{MR3394370,MR416379,MR1392419,MR1774264}, chemically reactive flows \cite{MR1618466}, non-equilibrium thermodynamics \cite{MR2573593,muller2007history,zhu2015conservation}, traffic flows \cite{MR3816765,MR3867627,MR4656780}, and so on.

As $\epsilon$ goes to zero, the relaxation system formally converges to the equilibrium system.
The rigorous proof of relaxation limit has been given in \cite{MR1693210} for the initial value problem (IVP) under the structural stability condition. For the IBVPs, the theoretical results have been developed under the stability condition and a so-called generalized Kreiss condition (GKC) \cite{MR1722195}. Particularly, the reduced boundary condition for the equilibrium system was derived from the boundary condition of the relaxation system. The interested reader is referred to \cite{MR1604274,MR1793199,MR1900493,MR1919787,MR2030150,MR4355918} for further works in this direction.

In order to solve the partial differential equations with stiff source term, a so-called asymptotic-preserving (AP) concept has been proposed for the effective numerical schemes. As pointed out in \cite{MR2964096}, the basic idea of AP scheme is to develop numerical methods that preserve the asymptotic limits from the microscopic to the macroscopic models, in the discrete setting. This kind of numerical schemes have been well-developed for the IVPs of relaxation systems and kinetic equations, see \cite{MR1445737,MR3622624,MR2674294,MR3645390,MR4436589,MR1655853,MR888058,MR2460781,MR1619859} and the references cited therein. Particularly, the macroscopic behavior can be captured by solving the original relaxation system with coarse grids, even in cases where the relaxation system admits an initial layer \cite{MR1358521}.

Nevertheless, there are rare paper about the numerical methods for the IBVPs of relaxation systems. In this work, we investigate the possibility to capture the macroscopic behavior of IBVPs by using coarse grids, particularly in cases where the relaxation system features boundary layers. To this end, we need to consider two questions: (1) Given a numerical scheme, how to check the AP property for the IBVPs? (2) How to design an AP scheme for the IBVPs?

To address the first question, we recall the definition of AP scheme \cite{MR2964096}. 
Denote the model of relaxation system by $\mathcal{F}^{\epsilon}$. As $\epsilon\rightarrow 0$ the model is approximated by an equilibrium model $\mathcal{F}^{0}$ which is independent of $\epsilon$. Denote the numerical scheme for $\mathcal{F}^{\epsilon}$ by $\mathcal{F}^{\epsilon}_{\delta}$ with $\delta$ the mesh size. Then the scheme is said to be AP if the limiting scheme $\mathcal{F}^{0}_{\delta}$ is a good approximation of $\mathcal{F}^{0}$.
To check the AP property for IVPs, one can usually take $\epsilon\rightarrow 0$ in $\mathcal{F}^{\epsilon}_{\delta}$
and see if the limiting scheme
$\mathcal{F}^{0}_{\delta}$ is consistent with the equilibrium system $\mathcal{F}^{0}$.

However, this step is not so straightforward for the IBVPs. 
We show the AP property of $\mathcal{F}^{\epsilon}_{\delta}$ by the following way. Denote the solution of the relaxation system
$\mathcal{F}^{\epsilon}$ by $U^{\epsilon}$. The relaxation limit of $U^{\epsilon}$ as $\epsilon\rightarrow 0$, if exists, is denoted by $U^0$. One can substitute this limit into the limiting scheme $\mathcal{F}^{0}_{\delta}$ and check the truncation error. In general, it is not easy to express the original solution $U^\epsilon$ directly. Therefore, we exploit the asymptotic solution $U_\epsilon$ in \cite{MR1793199,MR1722195}, take the limit $U_\epsilon\rightarrow U_0$, and substitute $U_0$ into the limiting scheme $\mathcal{F}^{0}_{\delta}$.
Thanks to the error estimates \cite{MR1793199,MR1722195}, the difference between $U_\epsilon$ and $U^\epsilon$ is small and thus it is reasonable to use the limit $U_0$ as a replacement.

Now we turn to discuss the second question. Note that the task is not trivial even for the first-order scheme due to the effect of boundary layers. In Section \ref{Section2}, we show this by considering a simple relaxation system---the Jin-Xin model. 
It is well-known that, as $\epsilon\rightarrow 0$, the first-order upwind implicit-explicit (IMEX) scheme for the Jin-Xin model becomes the Lax-Friedrichs scheme for the equilibrium system. However, in considering IBVPs, the usual discretization for the boundary condition may lead to wrong boundary value due to the existence of boundary layer (see Section \ref{section2.2}). Moreover, the error of the boundary value may spread and destroy the solution in the whole domain (see Example 4 in Section \ref{Section5}).

In this work, we give a new scheme by resorting to the theoretical result in \cite{MR1722195}.
Specifically, we adapt the matrix in GKC to design a new spatial discretization for the flux term. 
For the non-stiff case $\epsilon\gg \delta$, our scheme reduces to the normal first-order upwind scheme. For the stiff-case $\epsilon\ll \delta$, our scheme automatically gives a proper discretization for the equilibrium system. According to the asymptotic analysis in \cite{MR1722195}, the thickness of boundary-layer should be
$O(\epsilon)$. It means that the boundary layer behaves like a jump  when we take the mesh size  $\delta\gg\epsilon$. Surprisingly,  this jump can be well-captured by our scheme with coarse mesh. In Section \ref{Section2}, we first consider the Jin-Xin model and illustrate the basic idea to design boundary AP scheme. Then we extend the idea to IBVPs of general linear relaxation systems with constant coefficients \cite{MR1722195}. For simplicity, we assume that there is no initial-layer and the boundary is non-characteristic for both the relaxation system and equilibrium system \cite{MR1722195}. 

Furthermore, we exploit the idea to design a scheme for the interface problem. Namely, we consider the following IVP for the relaxation system:
\begin{align*}
    &U_t+A(U)U_x=\frac{1}{\epsilon(x)}Q(U),\qquad x\in \mathbb{R}
\end{align*}
and the relaxation parameter has a discontinuity at $x=0$, i.e., 
$\epsilon(x)=1$ for $x<0$ and
$\epsilon(x)=\epsilon_0\ll1$ for $x\geq0$.
This type of multiscale problems describe many important physical phenomena such as the space shuttle reentry problems in aerodynamics \cite{MR1262639,MR1335825}, the transport problems for materials with very different
opacities \cite{MR1916293,MR2026400}. In these problems, the relaxation time varies drastically near the interface.
It was shown in \cite{MR4593212,MR3008837} that the interface problem has very similar features with the IBVP. Particularly, there exists interface layer near the point $x=0$. For such problems, one can either use very fine mesh ($\delta \ll \epsilon_0$) or use the domain decomposition method \cite{MR533346,MR4593212,MR3008837} which solves different equations 
on different regions according to regimes of $\epsilon$. In this work, we develop a numerical method that solves one system (the relaxation system) with a unified scheme for the whole domain.

The paper is organized as follows. In Section \ref{Section2}, we show the formal expansion for the Jin-Xin model and illustrate the idea of asymptotic-preserving scheme for the IBVPs.
Section \ref{Section3} is devoted to present the AP scheme for the IBVPs of general linear relaxation systems. In Section \ref{Section4}, we apply the idea to design a unified numerical scheme for the interface problem. At last, in Section \ref{Section5}, we
give some numerical examples to show the validity of our scheme. Some missing details can be found in Appendix \ref{AppendA}.


\section{An illustrative example: Jin-Xin model}\label{Section2}

\subsection{Initial-boundary value problem}\label{sec:2.1}

Consider the following initial-boundary value problem for the Jin-Xin relaxation model:
\begin{align}
&\left\{\begin{array}{l}
u_t + v_x = 0,\\[3mm]
v_t + u_x = (f(u)-v)/\epsilon,
\end{array}
\right. \qquad x>0,~t>0, \label{JX-sys}\\[2mm]
&~~B_u u(0,t) + B_v v(0,t) = b(t).\label{JX-BC}
\end{align}
Here $u=u(x,t)$ and $v=v(x,t)$ are the unknowns, $f(u)$ is a convex function, $\epsilon$ is the relaxation time, and
$B_u$, $B_v$ are two constants. In this paper, we only consider the case where $f'(u)<0$ for any $u$ or $f'(u)>0$ for any $u$.

According to the asymptotic analysis in \cite{MR1793199,MR1722195}, we consider the ansatz 
\begin{equation}\label{JX-asy}
\begin{pmatrix}
    u_{\epsilon}\\
    v_{\epsilon}
\end{pmatrix}(x,t)=
\begin{pmatrix}
    \bar{u}\\
    \bar{v}
\end{pmatrix}(x,t)+
\begin{pmatrix}
    \mu\\
    \nu
\end{pmatrix}(y,t),\qquad y=\frac{x}{\epsilon}.
\end{equation}
Here $(\bar{u},\bar{v})$ is the outer solution satisfying $\bar{v} = f(\bar{u})$ and the equilibrium system
\begin{align}
\bar{u}_t + f(\bar{u})_x = 0.\label{JX-Eq1}
\end{align}
Besides, $(\mu,\nu)$ represents the boundary-layer correction satisfying $(\mu,\nu)(\infty,t)\equiv0$. 
To derive the equation for $(\mu,\nu)$, we introduce the operator 
$$
\mathcal{L}(u,v) := \begin{pmatrix}
u_t + v_x\\[1mm]
v_t + u_x - (f(u)-v)/\epsilon
\end{pmatrix}.
$$
Note that $(u_\epsilon,v_\epsilon)$ and $(\bar{u},\bar{v})$ satisfy the equation approximately. We compute
$$
\mathcal{L}(u_{\epsilon},v_{\epsilon})-\mathcal{L}(\bar{u},\bar{v}) = \begin{pmatrix}
\mu_t +\nu_y/\epsilon \\[2mm]
\nu_t + \mu_y/\epsilon - (f(\bar{u}+\mu)-f(\bar{u})-\nu)/\epsilon
\end{pmatrix}.
$$
Taking the coefficient of $O(1/\epsilon)$ to be zero, we have $\nu(y,t)\equiv\nu(\infty,t)=0$ and
\begin{equation}\label{JX-BL}
\mu_y = f(\bar{u}(0,t)+\mu) - f(\bar{u}(0,t)).
\end{equation}
Here we use the expansion
$\bar{u}(x,t)=\bar{u}(\epsilon y,t)=\bar{u}(0,t) + O(\epsilon)$. By the classical theory of ODE, when $f'(\bar{u}(0,t))>0$, the only bounded solution of \eqref{JX-BL} is $\mu\equiv0$. In this case, there is no boundary-layer. For the case $f'(\bar{u}(0,t))<0$, the equation \eqref{JX-BL} needs one initial condition at $y=0$.

Now we turn to consider the boundary condition. Since the asymptotic solution $(u_{\epsilon},v_{\epsilon})$ satisfies the boundary condition approximately, we have
$$
B_u(\bar{u}(0,t)+\mu(0,t)) + B_v f(\bar{u}(0,t)) = b(t).
$$
As $f'(\bar{u}(0,t))>0$, we know that $(\mu,\nu)\equiv 0$ and there is no boundary layer. The boundary condition for \eqref{JX-Eq1} is
\begin{equation}\label{JX-rBC}
B_u\bar{u}(0,t) + B_v f(\bar{u}(0,t)) = b(t).
\end{equation}
For the case $f'(\bar{u}(0,t))<0$, there is no boundary condition for \eqref{JX-Eq1} and the initial condition for \eqref{JX-BL} is given by
\begin{equation}\label{JX-mu}
B_u\mu(0,t) = b(t) - B_u\bar{u}(0,t) - B_v f(\bar{u}(0,t)).
\end{equation}

The error estimate in \cite{MR1793199} indicates that the asymptotic solution approximates the exact solution of \eqref{JX-sys}-\eqref{JX-BC} as $\epsilon\rightarrow 0$. On the other hand, we take $\epsilon\rightarrow 0$ in \eqref{JX-asy} to obtain the asymptotic limit
$$
(u_{lim},v_{lim})(x,t)=\left\{\begin{array}{lc}
\big(\bar{u}(x,t)+\mu(0,t),~f(\bar{u}(x,t))\big)\qquad  & f'(\bar{u}(0,t))<0, \\[2mm]
\big(\bar{u}(x,t),~f(\bar{u}(x,t))\big)  \qquad   & f'(\bar{u}(0,t))>0.
\end{array}\right.
$$ 
For $f'>0$, $\bar{u}$ is solved from the equilibrium system \eqref{JX-Eq1} with reduced boundary condition \eqref{JX-rBC}. For $f'<0$, $\bar{u}$ is solved from \eqref{JX-Eq1} and $\mu(0,t)$ is solved from \eqref{JX-mu}. As a preparation for the next discussions, we also denote the evaluation of $(u_{lim},v_{lim})$ on grid points $t_n=n \tau~(n\geq 0)$ and $x_j=jh ~(j\geq 0)$. For the case $f'<0$, we have  
\begin{equation}\label{JX-Alimit}
(u_{lim},v_{lim})(x_j,t_n)=\left\{\begin{array}{lc}
(\bar{u}_0^n+\mu_0^n,~f(\bar{u}_0^n))\qquad  & j=0, \\[2mm]
(\bar{u}_j^n,~f(\bar{u}_j^n))  \qquad   & j\geq 1,
\end{array}\right.
\end{equation}
where $\bar{u}_j^n=\bar{u}(x_j,t_n)$ and $\mu_0^n=\mu(0,t_n)$ are evaluation of $\bar{u}(x,t)$ and $\mu(0,t)$ on grid points. On the other hand, when $f'>0$, there is no boundary layer and we have
$
(u_{lim},v_{lim})(x_j,t_n)= 
(\bar{u}_j^n,~f(\bar{u}_j^n))$ for 
$j\geq 0$.

\subsection{Classical first-order upind IMEX scheme}\label{section2.2}
In this subsection, we show that the original upwind scheme may lead to wrong boundary value in computing the IBVPs.
Denote $U=(u,v)^T$ and the coefficient matrix of \eqref{JX-sys} by $A$. We have the characteristic decomposition
$$
A=R^A_+ \Lambda^A_+ L^A_+ +  R^A_- \Lambda^A_- L^A_-:=A_++A_-,
$$
where $\Lambda^A_+=1$ and $\Lambda^A_-=-1$ are the eigenvalues of $A$ and 
\begin{equation*}
R^A_+ = \frac{1}{\sqrt{2}} \begin{pmatrix}
    1 \\ 
    1
\end{pmatrix},\quad R^A_- = \frac{1}{\sqrt{2}} \begin{pmatrix}
    -1 \\ 
    1
\end{pmatrix},\quad L^A_+ = \frac{1}{\sqrt{2}} \begin{pmatrix}
    1 & 1
\end{pmatrix},\quad L^A_- = \frac{1}{\sqrt{2}} \begin{pmatrix}
    -1 & 1
\end{pmatrix}.
\end{equation*}
The first-order upwind IMEX scheme is:
\begin{align}
    &U^{n+1}_j-U^n_j + \lambda A_+(U^{n}_j-U^n_{j-1}) + \lambda A_- (U^n_{j+1}-u^{n}_j) = \frac{\tau}{\epsilon}
    \begin{pmatrix}
        0\\[1mm]
        f(u^{n+1}_j)-v^{n+1}_j
    \end{pmatrix},\label{original-upwind}\\[1mm]
    &L^A_-(U^{n+1}_0-U^n_0) + \lambda L^A_-A(U^n_{1}-U^{n}_0) = \frac{\tau}{\epsilon}L^A_-
    \begin{pmatrix}
        0\\[1mm]
        f(u^{n+1}_0)-v^{n+1}_0
    \end{pmatrix}, \label{original-upwindBC1}\\
    &B_uu^{n+1}_0+B_vv^{n+1}_0 = b^{n+1}. \label{original-upwindBC2}
\end{align} 
Here $\lambda=\tau/h$.
As $\epsilon$ goes to zero, the scheme \eqref{original-upwind} gives $v^{n+1}_j=f(u^{n+1}_j)$ and 
$$
u^{n+1}_j-u^n_j + \frac{\lambda}{2} \Big(f(u^{n}_{j+1})-f(u^{n}_{j-1})\Big) - \frac{\lambda}{2} \big(u^{n}_{j+1}-2u^{n}_{j}+u^{n}_{j-1}\big)=0.
$$
For $j=0$, we have $v^{n+1}_0=f(u^{n+1}_0)$ and 
$$
B_uu^{n+1}_0+B_vf(u^{n+1}_0) = b^{n+1}.
$$
However, for the case $f'(u)<0$, we know from the asymptotic analysis that the correct value of $(u,v)(0,t)$ at the boundary point does not satisfy the relation
$v(0,t)=f(u(0,t))$ due to the existence of boundary layer. This leads to an $O(1)$ error for the classical IMEX scheme at the boundary point.

To be more specific, we consider the linear case with $f=-0.5u$. The initial data are given by $u(x,0) = 2\sin(x)$ and $v(x,0) = -\sin(x).$
The boundary condition is given by 
$u(0,t)+v(0,t) = \sin(t/2)+\sin(t).$
The asymptotic solution reads as 
$$
u(x,t) = 2\sin(x+t/2)+\sin(t)\exp(-2x/\epsilon),\quad v(x,t) = -\sin(x+t/2).
$$
For sufficiently small $\epsilon$, this asymptotic solution is a good approximation to the exact solution. Based on this, we know that the correct boundary value should approximate
$$
u(0,t)=2\sin(t/2)+\sin(t),\qquad
v(0,t)=-\sin(t/2).
$$ 
But the numerical scheme gives
$$
u_0^{n+1} = 2\sin(t^{n+1}/2)+2\sin(t^{n+1}),\qquad
v_0^{n+1} = -\sin(t^{n+1}/2)-\sin(t^{n+1}).
$$
\begin{remark}
For the Jin-Xin model, the error at the boundary point occurs when $f'(u)<0$. Since the characteristic speed is negative, the error only affects the grid points near the boundary. For general relaxation model, the error could also spread to the whole domain, e.g. see Example 3 in Section \ref{Section5}. 
\end{remark}


\subsection{Boundary Asymptotic-preserving treatment}


In the upwind scheme, the spatial derivative term is discretized according to the characteristic decomposition of $A$, while no information on the equilibrium system (the sign of $f'(u)$) is involved. In this section, we consider a new spatial discretization by including the source term.
Motivated by the theoretical result for IBVPs in \cite{MR1722195}. we 
consider the matrix:
\begin{equation*}
M(\eta) = A^{-1}(I-\eta Q),\qquad Q = \begin{pmatrix}
    0 & 0\\[1mm]
    f'(u) & -1
\end{pmatrix}. 
\end{equation*}
Roughly speaking, we expect: (1) for the non-stiff case $\epsilon\gg \tau$, the parameter $\eta$ is sufficiently small and the eigenvectors of $M(\eta)$ approximate those of $A$; (2) for the stiff case $\epsilon\ll \tau$, the parameter $\eta$ is sufficiently large and the eigenvectors of $M(\eta)$ give a projection to the equilibrium system.

To this end, we first take $\eta=(\tau/\epsilon)^p$ with $p\geq 2$ a positive integer.
For the simplicity of notation, we denote $a=f'(u)$ and compute
\begin{equation}\label{M}
M(\eta) = \begin{pmatrix}
    0 & 1 \\
    1 & 0 
\end{pmatrix}
\begin{pmatrix}
    1 & 0 \\
    -\eta a & 1+\eta 
\end{pmatrix}=
\begin{pmatrix}
    -\eta a & 1+\eta \\
     1 & 0
\end{pmatrix}.
\end{equation}
One can check that $M(\eta)$ has one positive and one negative eigenvalue (see Appendix \ref{AppendA}).
Let $L_+^M$ and $L_-^M$ be the corresponding left-eigenvectors of $M$. We have
\begin{lemma}\label{lem2.1}
For sufficiently small $\eta$, it follows that
$$
L_-^M(\eta) = L^A_- + O(\eta),\quad 
L_+^M(\eta) = L^A_+ + O(\eta),
$$
where $L^A_+$ and $L^A_-$ are left-eigenvectors of $A$ associated with positive and negative eigenvalues. On the other hand, for sufficiently large $\eta$, we have 
$$
L_-^M = \left\{\begin{array}{ll}
(0,~1)+O(\eta^{-1}) &  a<0\\[3mm]
(-a,~1)+O(\eta^{-1}) & 
a>0
\end{array}\right.
~~
L_+^M = \left\{\begin{array}{ll}
   (-a,~1)+O(\eta^{-1}) &  a<0\\[3mm]
(0,~1)+O(\eta^{-1}) & 
a>0.
\end{array}\right.
$$
\end{lemma}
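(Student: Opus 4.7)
The plan is to compute the eigenvalues and left eigenvectors of $M(\eta)$ explicitly from \eqref{M} and then asymptotically expand in the two regimes. The characteristic polynomial is $\lambda^2 + \eta a \lambda - (1+\eta) = 0$, giving
\[
\lambda_\pm(\eta) = \tfrac{1}{2}\bigl(-\eta a \pm \sqrt{\eta^2 a^2 + 4 + 4\eta}\,\bigr).
\]
A short computation from $L M = \lambda L$ shows that a left eigenvector associated with $\lambda = \lambda_\pm(\eta)$ can be taken, up to a scalar, as $(\lambda,\; 1+\eta)$. With these formulas in hand, the lemma reduces to expanding $\lambda_\pm(\eta)$ and choosing a suitable normalization in each regime.

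For the small-$\eta$ regime, I would first observe that $M(0) = A^{-1}$ shares its left eigenvectors with $A$, with reciprocal eigenvalues $\pm 1$. Because these eigenvalues are simple, standard analytic perturbation theory gives that $L_\pm^M$ depend analytically on $\eta$ near $\eta=0$, hence $L_\pm^M(\eta) = L_\pm^A + O(\eta)$ after a common normalization. Alternatively, substituting $\lambda_\pm = \pm 1 + O(\eta)$ into $(\lambda,\; 1+\eta)$ and rescaling by $\pm 1/\sqrt{2}$ matches the specific form of $L_\pm^A$ used in Section~\ref{section2.2}.

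For the large-$\eta$ regime, the key tool is the expansion
\[
\sqrt{\eta^2 a^2 + 4 + 4\eta} \;=\; |\eta a| + \tfrac{2}{|a|} + O(\eta^{-1}),
\]
which yields one eigenvalue of order $-\eta a$ and a bounded one tending to $1/a$. Substituting each into $(\lambda,\; 1+\eta)$ and dividing by $1+\eta$ to normalize the second component to $1$ produces an eigenvector equal to either $(0,1) + O(\eta^{-1})$ (when $\lambda$ is the bounded one) or $(-a,1) + O(\eta^{-1})$ (when $\lambda \sim -\eta a$). The sign of $a$ then decides which of $\lambda_+$ and $\lambda_-$ is the dominant eigenvalue, and matching labels gives precisely the case split stated in the lemma.

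The main place for care is tracking the sign of $a$ inside $|\eta a|$, since flipping the sign of $a$ swaps which of $\lambda_\pm$ carries the $-\eta a$ asymptotics and hence swaps which of $L^M_\pm$ tends to $(-a,1)$ versus $(0,1)$. Once the two cases $a>0$ and $a<0$ are handled separately, the identifications are mechanical, and everything else is direct algebra with no hidden subtleties.
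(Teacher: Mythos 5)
Your proposal is correct and follows essentially the same route as the paper: compute the characteristic polynomial $\lambda^2+\eta a\lambda-(1+\eta)=0$, take the left eigenvectors in the form $(\lambda,\,1+\eta)$ up to normalization, and expand $\lambda_\pm$ for small and large $\eta$ with the case split on the sign of $a$ determined by which root carries the $-\eta a$ asymptotics. The only cosmetic difference is the normalization in the large-$\eta$ regime (you divide by $1+\eta$, the paper divides by the Euclidean norm and then rescales by $\sqrt{1+a^2}$), which does not affect the conclusion.
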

The proof of this lemma is based on a direct computation and we leave it to Appendix \ref{AppendA}. 
We denote 
$
(R^M_+, R^M_-) := \begin{pmatrix}
   L^M_+\\[1mm]
   L^M_-
\end{pmatrix}^{-1} 
$
and propose the following scheme.\\[2mm]
\textbf{First-order Boundary AP scheme}
\begin{align}
&U_j^{n+1}-U^n_j + \lambda AR^M_+L^M_+ (U^{n}_j-U^n_{j-1})  \nonumber\\
&\qquad \qquad \quad + \lambda AR^M_-L^M_-(U^n_{j+1}-U^{n}_j) =\frac{\tau}{\epsilon}
    \begin{pmatrix}
        0\\[1mm]
        f(u^{n+1}_j)-v^{n+1}_j
    \end{pmatrix}, \label{JX-newscheme0}\\
    &L^M_-A^{-1}(U^{n+1}_0-U^n_0) + \lambda L^M_-(U^n_{1}-U^{n}_0) = \frac{\tau}{\epsilon}L^M_-A^{-1}
    \begin{pmatrix}
        0\\[1mm]
        f(u^{n+1}_0)-v^{n+1}_0
    \end{pmatrix}, \label{JX-newschemeBC1}\\
    &B_uu^{n+1}_0+B_vv^{n+1}_0 = b^{n+1}. \label{JX-newschemeBC2}
\end{align}
Next we discuss this scheme for non-stiff case ($\epsilon\gg \tau$) and stiff case ($\epsilon\ll \tau$) separately. 
 
\textbf{Non-stiff case:}
According to our choice of $\eta$, we see that $O(\eta)=O(\tau^p)$ as $\epsilon=O(1)$. Now we show that the scheme \eqref{JX-newscheme0}---\eqref{JX-newschemeBC2} reduces to 
the classical first-order upwind scheme \eqref{original-upwind}---\eqref{original-upwindBC2} with an additional $O(\tau^p)$ error.
From Lemma \ref{lem2.1}, we compute that 
$$
AR_+^ML_+^M = AR^A_+L^A_++O(\eta) = R^A_+ \Lambda^A_+ L^A_+ +O(\eta) = A_+ +O(\eta).
$$
Similarly, we also compute $AR_-^ML_-^M= A_- +O(\eta)$. Then we see that \eqref{JX-newscheme0} reduces to the scheme \eqref{original-upwind} with an error $O(\tau^p)$.
Moreover, we compute 
$$
L^M_-A^{-1} = L^A_-A^{-1} + O(\eta) = (\Lambda_-^A)^{-1}L^A_- +O(\eta).
$$
Then \eqref{JX-newschemeBC1} becomes  
\begin{align*}
    &(\Lambda_-^A)^{-1}L^A_-(U^{n+1}_0-U^n_0) + \lambda L^A_-(U^n_{1}-U^{n}_0) = \frac{\tau}{\epsilon}(\Lambda_-^A)^{-1}L^A_-
    \begin{pmatrix}
        0\\[1mm]
        f(u^{n+1}_0)-v^{n+1}_0
    \end{pmatrix}
    +O(\tau^p).
\end{align*}
Multiplying $\Lambda^A_-$ on both sides and using the relation $\Lambda^A_-L^A_-=L^A_-A$ yield the scheme \eqref{original-upwindBC1}. At last, the scheme \eqref{JX-newschemeBC2} is the same as
\eqref{original-upwindBC2}.

\textbf{Stiff case:}
Next we discuss the stiff case where $\epsilon\ll \tau$. 
When $a<0$, we compute from Lemma \ref{lem2.1} that  
$R_-^M=(1/a,~1)^T+O(\eta^{-1})$, $R_+^M = (-1/a,~0)^T+O(\eta^{-1})$ and 
$$
AR_+^ML_+^M = \begin{pmatrix}
0 & 0\\[1mm]
1 & -1/a
\end{pmatrix}+O(\eta^{-1}),\qquad 
AR_-^ML_-^M=
\begin{pmatrix}
0 & 1\\[1mm]
0 & 1/a
\end{pmatrix}+O(\eta^{-1}).
$$
Then a direct computation of the scheme \eqref{JX-newscheme0} gives
\begin{align} 
&u^{n+1}_j - u^n_j + \lambda(v^n_{j+1}-v^{n}_j) = O\left(\dfrac{\epsilon^p}{\tau^p}\right),\qquad j\geq 1,\label{JX-BAP1}\\ 
&f(u_j^{n+1}) - v_j^{n+1} = O\left(\dfrac{\epsilon}{\tau}\right),\qquad j\geq 1.\label{JX-BAP2}
\end{align}
Moreover, we compute $L_-^MA^{-1}=(1,~0)+O(\eta^{-1})$ and \eqref{JX-newschemeBC1}---\eqref{JX-newschemeBC2} become
\begin{align}
&u^{n+1}_0 - u^n_0 + \lambda(v^n_{1}-v^{n}_0)=O\left(\dfrac{\epsilon^{p-1}}{\tau^{p-1}}\right),\label{JX-BAP3}\\
&B_uu^{n+1}_0+B_vv^{n+1}_0 = b^{n+1}.\label{JX-BAP4}
\end{align}
As $\epsilon$ goes to zero, we obtain the limiting scheme by dropping the residual terms in \eqref{JX-BAP1}-\eqref{JX-BAP3}. At first, we substitute the asymptotic limit \eqref{JX-Alimit} into the limiting scheme \eqref{JX-BAP1}. Recall Section \ref{sec:2.1} that $\bar{u}$ satisfies the equilibrium system. Thus we obtain from Taylor's expansion that
\begin{align*}
&\bar{u}^{n+1}_j - \bar{u}^n_j + \lambda(f(\bar{u}^n_{j+1})-f(\bar{u}^n_j)) = O(\tau^2),\qquad j\geq 1.
\end{align*}
Secondly, we see that the limit \eqref{JX-Alimit} satisfies \eqref{JX-BAP2} as $\epsilon\rightarrow 0$. For the boundary point $j=0$, we substitute the limit $(\bar{u}_0^{n}+\mu_0^n,~f(\bar{u}_0^{n}))$ into \eqref{JX-BAP3} and compute
\begin{align*} 
&\bar{u}_0^{n+1} - \bar{u}_0^{n} +  \lambda (f(\bar{u}_1^{n})-f(\bar{u}_0^{n})) + \mu^{n+1}_0-\mu_0^{n}
=O(\tau^2)+O(\tau).
\end{align*}
At last, we recall \eqref{JX-mu} to see that the boundary condition \eqref{JX-BAP4} is satisfied. The above discussion implies that the asymptotic limit \eqref{JX-Alimit} satisfies the limiting scheme up to  $O(\tau^2)$ for $j\geq1$ and up to $O(\tau)$ for $j=0$. To conclude, the limiting scheme of \eqref{JX-newscheme0}---\eqref{JX-newschemeBC2} is a first-order scheme for solving the asymptotic limit \eqref{JX-Alimit}.

As $a>0$, we have 
$R_-^M=(-1/a,~0)^T+O(\eta^{-1})$, $R_+^M = (1/a,~1)^T+O(\eta^{-1})$ and 
$$
AR_+^ML_+^M = \begin{pmatrix}
0 & 1\\[1mm]
0 & 1/a
\end{pmatrix}+O(\eta^{-1}),\quad 
AR_-^ML_-^M=
\begin{pmatrix}
0 & 0\\[1mm]
1 & -1/a
\end{pmatrix}+O(\eta^{-1}).
$$
Then the scheme \eqref{JX-newscheme0} gives  
\begin{align*} 
&u^{n+1}_j - u^n_j + \lambda(v^n_j-v^{n}_{j-1})=O\left(\dfrac{\epsilon^p}{\tau^p}\right),\qquad j\geq 1,\\
&f(u_j^{n+1}) - v_j^{n+1} = O\left(\dfrac{\epsilon}{\tau}\right),\qquad j\geq 1.
\end{align*} 
Moreover, we compute $L_-^MA^{-1}=(1,-a)+O(\eta^{-1})$ and \eqref{JX-newschemeBC1}---\eqref{JX-newschemeBC2} become
\begin{align*}
&f(u_0^{n+1}) - v_0^{n+1} = O\left(\dfrac{\epsilon}{\tau}\right),\\[1mm]
&B_uu^{n+1}_0+B_vv^{n+1}_0 = b^{n+1}. 
\end{align*}
In this case, there is no boundary-layer and it is easy to see that the above scheme is first-order in solving the limit $(\bar{u},f(\bar{u}))$.

\begin{remark}
In the above derivation, we use $a=f'(u)$ in the definition of the matrix $M(\eta)$. It means that 
we need to compute $f'(u_j^n)$ at each grid point. For the Jin-Xin model, there is a simpler way to implement. Namely, we use 
$a = \text{sign}(f'(u))$ in the definition of $M(\eta)$. Then, we can repeat the derivation in this subsection and verify the AP property.
In particular, the limiting scheme as 
$\epsilon\rightarrow 0$ remains the same.
\end{remark}

\section{General relaxation system}\label{Section3}
\subsection{Relaxation limits for IBVPs of relaxation systems}\label{sec:3.1}
We consider the general one dimensional linear relaxation 
system 
\begin{align}
    &U_t+AU_x=\frac{1}{\epsilon}QU,\label{eq:constantproblem}\\[1mm]
    &BU(0,t)=b(t).\label{boundaryconditions}
\end{align}
Here $U\in\mathbb{R}^{n}$ is the unknown, $A$, $Q$ are $n\times n$ matrices with constant coefficients,  and $B$ is an $n_+\times n$ constant matrix with $n_+$ the number of positive eigenvalues for $A$. 
Without loss of generality \cite{MR1693210}, we may assume the block-matrix form:
\begin{align}\label{partition}
    U=\begin{pmatrix}
        u \\
        v
    \end{pmatrix},\quad 
    A=
    \begin{pmatrix}
        A_{11} & A_{12} \\
        A_{21} & A_{22}
    \end{pmatrix},\quad 
    Q=
    \begin{pmatrix}
        0 & 0 \\
        0 & S
    \end{pmatrix},\quad 
    B=(B_u,B_v).
\end{align}
Here $u\in\mathbb{R}^{n-r}$ and $v\in\mathbb{R}^{r}$ represent the equilibrium and non-equilibrium unknowns, $S$ is an $r\times r$ negative-definite matrix.

In \cite{MR1722195}, a so-called generalized Kreiss condition (GKC) was proposed to guarantee the zero relaxation limit for the IBVPs of \eqref{eq:constantproblem}-\eqref{boundaryconditions}:

\begin{definition}[{Generalized Kreiss condition} \cite{MR1722195}] \label{def:GKC}
The boundary matrix $B$ in \eqref{boundaryconditions} satisfies the GKC, if there exist a constant $c_K>0$, such that
$$
|\det\{BR^M_+(\xi,\eta)\}|\geq c_K\sqrt{\det\{R^{M*}_+(\xi,\eta)R^{M}_+(\xi,\eta)\}},
$$
for all real number $\eta$ with $\eta\geq 0$ and complex number $\xi$ with $\textrm{Re} \xi>0$.
Here $R^M_+(\xi,\eta)$ is the right-unstable matrix for $M(\xi,\eta)=A^{-1}(\xi I-\eta Q)$.
\end{definition}
\noindent Here we use the assumption that $A$ is invertible. Note that the right-unstable matrix is defined by
\begin{definition}[Right-stable/unstable matrix \cite{MR1722195}]\label{RS-Umatrix}
Let the matrix $A\in\mathbb{R}^{n\times n}$ have precisely $k\ (0\leq k\leq n)$ stable
eigenvalues (eigenvalues with negative real parts). The full-rank matrix $R^A_- \in \mathbb{R}^{n\times k}$
is called a right-stable matrix of $A$ if
$$AR^A_-=R^A_-S$$
where $S$ is a $k\times k$ invertible matrix with $k$ stable eigenvalues. The right-unstable matrix $R^A_+$ can be defined similarly.
\end{definition}

According to the asymptotic analysis in \cite{MR1722195}, we consider the ansatz 
\begin{equation}\label{R-asy}
\begin{pmatrix}
    u_{\epsilon}\\ 
    v_{\epsilon}
\end{pmatrix}(x,t)=
\begin{pmatrix}
    \bar{u}\\ 
    \bar{v}
\end{pmatrix}(x,t)+
\begin{pmatrix}
    \mu\\ 
    \nu
\end{pmatrix}(y,t),\qquad y=\frac{x}{\epsilon}.
\end{equation}
Here $(\bar{u},\bar{v})$ is the outer solution satisfying 
$\bar{v} = 0$ and the equilibrium system
\begin{align}
\bar{u}_t + A_{11}\bar{u}_x = 0.\label{R-Eq1}
\end{align}
And $(\mu,\nu)$ represents the boundary-layer correction satisfying $(\mu,\nu)(\infty,t)\equiv0$. 
To see the equation for the boundary-layer correction term, we denote the operator 
$$
\mathcal{L}(u,v) = \begin{pmatrix}
        u \\
        v
    \end{pmatrix}_t + 
    \begin{pmatrix}
        A_{11} & A_{12} \\
        A_{21} & A_{22}
    \end{pmatrix}
    \begin{pmatrix}
        u \\
        v
    \end{pmatrix}_x -
    \frac{1}{\epsilon}
    \begin{pmatrix}
        0 & 0 \\
        0 & S
    \end{pmatrix}
    \begin{pmatrix}
        u \\
        v
    \end{pmatrix}.
$$
Note that $(u_\epsilon,v_\epsilon)$ and $(\bar{u},\bar{v})$ satisfy the equation approximately. We compute
$$
\mathcal{L}(u_{\epsilon},v_{\epsilon})-\mathcal{L}(\bar{u},\bar{v}) = \begin{pmatrix}
        \mu \\
        \nu
    \end{pmatrix}_t + \frac{1}{\epsilon}
    \begin{pmatrix}
        A_{11} & A_{12} \\
        A_{21} & A_{22}
    \end{pmatrix}
    \begin{pmatrix}
        \mu \\
        \nu
    \end{pmatrix}_y -
    \frac{1}{\epsilon}
    \begin{pmatrix}
        0 & 0 \\
        0 & S
    \end{pmatrix}
    \begin{pmatrix}
        \mu \\
        \nu
    \end{pmatrix}.
$$
Taking the coefficient of $O(1/\epsilon)$ to be zero, we have 
\begin{align}
    \mu &= -A_{11}^{-1}A_{12}\nu,\label{R-BL1}\\[1mm]
    \nu_y &= (A_{22}-A_{21}A_{11}^{-1}A_{12})^{-1}S\nu :=H\nu .\label{R-BL2}
\end{align}
Here we use the assumption that $A_{11}$ is invertible. In order to obtain the bounded solution for \eqref{R-BL2}, $\nu(0,t)$ should be given on the stable subspace. Namely, we have 
\begin{equation}\label{DeriveBC2}
L^H_+\nu(0,t)=0
\end{equation}
with $L^H_+$ the left-unstable matrix for $H$.
Now we consider the boundary condition \eqref{boundaryconditions}. Since the asymptotic solution $(u_{\epsilon},v_{\epsilon})$ satisfies the boundary condition approximately, we use the relation $\bar{v}=0$ and \eqref{R-BL1} to get
\begin{equation}\label{DeriveBC1}
B_u\bar{u}(0,t) + (B_v - B_uA_{11}^{-1}A_{12})\nu(0,t) = b(t).
\end{equation}
Moreover, we consider the decomposition $\bar{u}=R^1_+\alpha_++R^1_-\alpha_-$ where $R^1_+$ and $R^1_-$ are the right-unstable and right-stable matrices for $A_{11}$, $\alpha_+$ and $\alpha_-$ represent the incoming and outgoing modes for \eqref{R-Eq1}. By the classical theory of hyperbolic systems \cite{MR2284507}, proper boundary conditions should be prescribed for $\alpha_+$. Then \eqref{DeriveBC2}-\eqref{DeriveBC1} become 
\begin{equation}\label{DeriveBC}
\begin{pmatrix}
B_u R^1_+ &  B_v - B_uA_{11}^{-1}A_{12}\\[1mm]
0 & L^H_+
\end{pmatrix}
\begin{pmatrix}
\alpha_+\\[1mm]
\nu(0,t)
\end{pmatrix}=
\begin{pmatrix}
b(t)-B_u R^1_-\alpha_- \\[1mm]
0
\end{pmatrix}.
\end{equation}
It was proved in \cite{MR1722195} that, if the boundary condition \eqref{boundaryconditions} satisfies the GKC, then \eqref{DeriveBC} is uniquely solvable. In this way, well-posed reduced boundary conditions for \eqref{R-Eq1}
and  initial conditions for \eqref{R-BL2} can be derived.

The error estimate in \cite{MR4213673} indicates that the asymptotic solution approximates the exact solution of \eqref{eq:constantproblem}-\eqref{boundaryconditions} as $\epsilon\rightarrow 0$. On the other hand, we take $\epsilon\rightarrow 0$ in \eqref{R-asy} to obtain the limit
\begin{equation*}
U_{lim}=\begin{pmatrix}
    u_{lim}\\
    v_{lim}
\end{pmatrix}(x,t)=
\begin{pmatrix}
    \bar{u}\\
    0
\end{pmatrix}(x,t)+
\begin{pmatrix}
    -A_{11}^{-1}A_{12}\nu\\
    \nu
\end{pmatrix}(0,t).
\end{equation*}
Here $\bar{u}$ is solved from the equilibrium system \eqref{R-Eq1} with reduced boundary condition and $\nu(0,t)$ is solved from \eqref{DeriveBC}. To express the asymptotic limit on gird points $x_j=jh$, $t_n=n\tau$ with $j,n\geq 0$, we denote
\begin{equation}\label{R-limit}
(u_{lim},v_{lim})(x_j,t_n)=\left\{\begin{array}{lc}
(\bar{u}_0^n-A_{11}^{-1}A_{12}\nu_0^n,~\nu_0^n)\qquad  & j=0, \\[2mm]
(\bar{u}_j^n,~0)  \qquad   & j\geq 1,
\end{array}\right.
\end{equation}
where $\bar{u}_j^n=\bar{u}(x_j,t_n)$ and $\nu_0^n=\nu(0,t_n)$.

\subsection{Boundary AP treatment}\label{section3.2}

Recall the boundary AP scheme for the Jin-Xin model. It seems natural to compute the eigenvectors of $M(1,\eta)=A^{-1}(I-\eta Q)$
and use the scheme as that in \eqref{JX-newscheme0}---\eqref{JX-newschemeBC2}. However, for general $A$ and $Q$, the eigenvectors are not necessarily analytical with respect to the parameter $\eta$ according to the perturbation theory of matrices \cite{MR678094}. 
In this case, we can not show the AP property as that in the previous section. 

Therefore, for general relaxation systems, we only consider two limiting case, i.e. $\eta=0$ and $\eta=\infty$.
Denote the right-stable and right-unstable matrix for $M(1,\eta)$ by $R_-^M(\eta)$ and $R_+^M(\eta)$.
When $\eta=0$, we know that $M(1,0)=A^{-1}$ and
$$
R_+^M(0) = R_+^A,\quad R_-^M(0) = R_-^A,
$$
where $R_+^A$ and $R_-^A$ are right-unstable and right-stable matrices of $A$.
On the other hand, it was shown in \cite{MR1722195} that
\begin{lemma}\label{lem3.1}
    If the coefficient matrix $A$ and $A_{11}$ are both invertible, the right-unstable and stable matrices $R_+^M(\eta)$ and $R_-^M(\eta)$ for $M(1,\eta)$ have the following form as $\eta\rightarrow \infty$: 
    $$
R_{\pm}^\infty:=\lim_{\eta\rightarrow\infty}R_{\pm}^M(\eta) = \begin{pmatrix}
        I_{n-r} & -A_{11}^{-1}A_{12} \\[1mm]
        0 & I_r
\end{pmatrix}
\begin{pmatrix}
    R_{\pm}^1 & 0 \\[1mm]
    0 & R_{\mp}^H
\end{pmatrix},
    $$
    where $R_+^1$ and $R_+^H$ ($R_-^1$ and $R_-^H$) are right-unstable (stable) matrices for $A_{11}$ and $H$.
\end{lemma}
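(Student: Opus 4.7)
The plan is to track the eigenvalues and invariant subspaces of $M(1,\eta)=A^{-1}(I-\eta Q)$ as $\eta\to\infty$ directly from the eigenvalue equation $(\xi A-I+\eta Q)r=0$. Writing $r=(r_1,r_2)^T$ according to the block partition \eqref{partition}, this becomes
\begin{equation*}
(\xi A_{11}-I)r_1+\xi A_{12}r_2=0, \qquad \xi A_{21}r_1+(\xi A_{22}-I+\eta S)r_2=0.
\end{equation*}
A dominant-balance analysis of the associated characteristic polynomial splits the spectrum of $M(1,\eta)$ into a \emph{slow} family with $\xi=O(1)$ and a \emph{fast} family with $\xi=O(\eta)$, each of which contributes to both the stable and the unstable part.

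For the slow family, treating $\xi=O(1)$ the second equation forces $r_2=O(1/\eta)$; plugging this back into the first gives $(\xi A_{11}-I)r_1=O(1/\eta)$, so in the limit $\xi$ is an eigenvalue of $A_{11}^{-1}$ with eigenvector $r_1$. Since $\operatorname{Re}(1/\lambda)$ has the same sign as $\operatorname{Re}(\lambda)$, the slow unstable (resp.\ stable) invariant subspace converges to $\Range\bigl(\begin{smallmatrix}R_{\pm}^1\\0\end{smallmatrix}\bigr)$. For the fast family I would rescale $\xi=\eta\zeta$ with $\zeta=O(1)$: the first equation reduces at leading order to $r_1=-A_{11}^{-1}A_{12}r_2+O(1/\eta)$, and substituting this into the second and dividing by $\eta$ yields $(\zeta S_A+S)r_2=O(1/\eta)$ with $S_A=A_{22}-A_{21}A_{11}^{-1}A_{12}$, so $\zeta\to-\mu$ with $\mu$ an eigenvalue of $H=S_A^{-1}S$ and eigenvector approaching $\bigl(\begin{smallmatrix}-A_{11}^{-1}A_{12}r_2\\ r_2\end{smallmatrix}\bigr)$. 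Because $\operatorname{Re}(\eta\zeta)=-\eta\operatorname{Re}(\mu)$, the fast unstable (resp.\ stable) part of the spectrum arises from the \emph{stable} (resp.\ unstable) eigenvalues of $H$, so the corresponding invariant subspace tends to $\Range\bigl(\begin{smallmatrix}-A_{11}^{-1}A_{12}R_{\mp}^H\\ R_{\mp}^H\end{smallmatrix}\bigr)$.

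Concatenating the slow and fast contributions into a single matrix produces exactly the block factorization claimed for $R_\pm^\infty$. The principal obstacle is upgrading the formal eigenvector asymptotics above to an actual convergence statement, because $M(1,\eta)$ need not be analytic at $\eta=\infty$ and individual eigenvectors can collide or form Jordan blocks along sequences $\eta\to\infty$. I would handle this by replacing individual eigenvectors with the Riesz spectral projector $\mathcal{P}_{\mathrm{slow}}(\eta)=\frac{1}{2\pi i}\oint_{\Gamma}(\xi I-M(1,\eta))^{-1}\,d\xi$ along a fixed bounded contour $\Gamma$ separating the slow spectrum from the diverging fast one (such $\Gamma$ exists for $\eta$ sufficiently large by the balance analysis); this projector depends continuously on $\eta$, has constant rank, and its range converges to the slow invariant subspace identified above, while the fast part is recovered by the analogous contour argument in the rescaled variable $\zeta$.
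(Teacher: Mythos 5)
The paper never proves Lemma \ref{lem3.1}: it imports the statement from \cite{MR1722195}, so there is no internal proof to compare against and your argument has to be judged on its own. On those terms it is correct and gives a legitimate self-contained derivation. The dominant-balance bookkeeping is right: the $n-r$ slow eigenvalues converge to eigenvalues of $A_{11}^{-1}$ with eigenvectors $(r_1,0)^T$; the $r$ fast eigenvalues satisfy $\xi=-\eta\mu+o(\eta)$ with $\mu\in\sigma(H)$ and eigenvectors approaching $(-A_{11}^{-1}A_{12}r_2,\,r_2)^T$; the sign flip $\mathrm{Re}\,\xi=-\eta\,\mathrm{Re}\,\mu$ is exactly what produces the swap $R_\mp^H$ in the second block column; and the two families exhaust the spectrum because $\det A=\det A_{11}\det(A_{22}-A_{21}A_{11}^{-1}A_{12})\neq0$ guarantees exactly $r$ eigenvalues of size $O(\eta)$. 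Phrasing the convergence through Riesz projectors rather than individual eigenvectors is also the right move, both because eigenvalues may collide and because $R_\pm^M(\eta)$ is only defined up to a change of basis, so the lemma is genuinely a statement about invariant subspaces.

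The one step you assert rather than prove --- uniform convergence of the resolvent on the fixed contour $\Gamma$ --- is true and is the only thing left to write out. Since $\xi I-M(1,\eta)=A^{-1}(\xi A-I+\eta Q)$, block inversion of $\xi A-I+\eta Q$ using the Schur complement of its $(2,2)$ block $\xi A_{22}-I+\eta S$ (invertible with norm of order $\eta$ because $S$ is negative definite) gives
\begin{equation*}
(\xi I-M(1,\eta))^{-1}\;\longrightarrow\;
\begin{pmatrix}
(\xi A_{11}-I)^{-1}A_{11} & (\xi A_{11}-I)^{-1}A_{12}\\
0 & 0
\end{pmatrix}
=\begin{pmatrix}
(\xi I-A_{11}^{-1})^{-1} & (\xi I-A_{11}^{-1})^{-1}A_{11}^{-1}A_{12}\\
0 & 0
\end{pmatrix}
\end{equation*}
uniformly for $\xi\in\Gamma$, whose contour integral over a loop enclosing the unstable spectrum of $A_{11}^{-1}$ is a projector with range $\Range(R_+^1)\times\{0\}$; the fast part follows from the same computation applied to $\eta^{-1}M(1,\eta)\to-A^{-1}Q$, whose nonzero spectrum is $\{-\mu:\mu\in\sigma(H)\}$. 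With that supplied, and with the standing assumption that neither $A_{11}$ nor $H$ has purely imaginary eigenvalues (so the limiting subspaces have the right, constant dimensions and are transversal), your proof is complete.
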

Having this, we consider the following matrix 
\begin{equation}\label{RMS}
R_{\pm}=R_{\pm}(\tau,\epsilon)=\left\{\begin{array}{lc}
R_{\pm}^A & \quad\tau<\epsilon, \\[2mm]
R_{\pm}^\infty & \quad \tau\geq\epsilon.
\end{array}\right.
\end{equation}
Denote $L_+$, $L_-$ such that
$\begin{pmatrix}
L_+\\
L_-
\end{pmatrix}
(R_+,~R_-) = I$.
We propose 
\begin{eqnarray}\label{newscheme0}
    \left\{\begin{array}{ll}
    U^{n+1}_j - U^n_j + \lambda AR_+L_+ (U^{n}_j-U^n_{j-1}) 
    \\[3mm]
    \qquad \qquad\quad + \lambda AR_-L_-(U^n_{j+1}-U^{n}_j)=\dfrac{\tau}{\epsilon}QU^{n+1}_j,\quad j\geq 1\\[3mm]
    L_-A^{-1}(U^{n+1}_0 -U^n_0) + \lambda   L_- (U^n_{1}-U^{n}_0)=\dfrac{\tau}{\epsilon}L_-A^{-1}Q U^{n+1}_0,\\[3mm]
    BU^{n+1}_0 = b^{n+1}.
    \end{array}\right.
\end{eqnarray}

Clearly, when $\tau\ll\epsilon$, our scheme is just the original first-order upwind scheme since $AR_{\pm}L_{\pm}=AR^A_{\pm}L^A_{\pm}=A_{\pm}$.
Next we discuss the scheme for the stiff case where $\tau \gg \epsilon$. As a preparation, we first recall the notation in Lemma \ref{lem3.1} and define $L_{\pm}^1$, $L_{\pm}^X$ such that
$$
\begin{pmatrix}
L_+^1\\[1mm]
L_-^1
\end{pmatrix}
\begin{pmatrix}
R_+^1 &
R_-^1
\end{pmatrix} = I,\quad 
\begin{pmatrix}
L_+^X\\[1mm]
L_-^X
\end{pmatrix}
\begin{pmatrix}
R_+^X &
R_-^X
\end{pmatrix} = I.
$$
From Lemma \ref{lem3.1}, we compute
\begin{align*}
L_{\pm}
 = 
\begin{pmatrix}
    L_{\pm}^1 & 0\\[2mm]
    0 & L_{\mp}^H
\end{pmatrix}
\begin{pmatrix}
        I_{n-r} & A_{11}^{-1}A_{12} \\[2mm]
        0 & I_r
\end{pmatrix}
\end{align*}
for the case $\epsilon\ll\tau$.
A direct computation shows that
\begin{align*}
AR_{\pm}L_{\pm}
=&\begin{pmatrix}
    A_{11}R_{\pm}^1L_{\pm}^1 & A_{11}R_{\pm}^1L_{\pm}^1A_{11}^{-1}A_{12} \\[2mm]
    \star & \star
\end{pmatrix}.
\end{align*}
According to the definition of right-unstable and stable matrices, we have 
$$
A_{11}R_+^1L_+^1 = R_+^1\Lambda_+^1L_+^1:=A_{11}^+,\qquad
A_{11}R_-^1L_-^1 = R_-^1\Lambda_-^1L_-^1:=A_{11}^-.
$$
Here $\Lambda_+^1$ and $\Lambda_-^1$ are positive and negative eigenvalues of $A_{11}$. By this block-matrix form, we rewrite the first row in  \eqref{newscheme0} and take $\epsilon\rightarrow 0$ to obtain
\begin{align}
&u^{n+1}_1 - u^n_1 + \lambda A_{11}^+(u^{n}_1-u^n_{0}-A_{11}^{-1}A_{12}v_0^n) + \lambda A_{11}^-
(u^n_{2}-u^{n}_1) =0,\quad j=1\label{R-limscheme1}\\[2mm]
&u^{n+1}_j - u^n_j + \lambda A_{11}^+(u^{n}_j-u^n_{j-1})+ \lambda A_{11}^-
(u^n_{j+1}-u^{n}_j) =0,\qquad j\geq 2\label{R-limscheme2}\\[2mm]
&v_{j}^{n+1} = 0,\quad j\geq 1.\label{R-limscheme3}
\end{align}
For the boundary point $j=0$, we compute
$$
A^{-1} = 
\begin{pmatrix}
        I_{n-r} & -A_{11}^{-1}A_{12} \\[2mm]
        0 & I_r
\end{pmatrix}
\begin{pmatrix}
    A_{11}^{-1} & 0 \\[2mm]
    Y & X
\end{pmatrix},
\quad 
L_-A^{-1} = \begin{pmatrix}
    L_{-}^1 & 0\\[2mm]
    0 & L_{+}^H
\end{pmatrix}\begin{pmatrix}
    A_{11}^{-1} & 0 \\[2mm]
    Y & X
\end{pmatrix}
$$
with $X=(A_{22}-A_{21}A_{11}^{-1}A_{12})^{-1}$
and $Y=-XA_{21}A_{11}^{-1}$. 
Then the scheme \eqref{newscheme0} gives
\begin{equation*}
\begin{pmatrix}
    L_{-}^1A_{11}^{-1} & 0 \\[2mm]
    L_{+}^H Y & L_{+}^H X
\end{pmatrix}
\left[
\begin{pmatrix}
u_0^{n+1} - u_0^n\\[2mm]
v_0^{n+1} - v_0^n    
\end{pmatrix} + \lambda 
\begin{pmatrix}
        A_{11} & A_{12} \\[2mm]
        A_{21} & A_{22}
    \end{pmatrix}
\begin{pmatrix}
u_1^n - u_0^n\\[2mm]
v_1^n - v_0^n    
\end{pmatrix} 
- \frac{\tau}{\epsilon} 
\begin{pmatrix}
0 \\[2mm]
S v_0^{n+1} 
\end{pmatrix}
\right] = 0.
\end{equation*}
Taking limit $\epsilon \rightarrow 0$, we obtain
\begin{align*} 
& L_{-}^1 A_{11}^{-1} (u_0^{n+1} - u_0^n) 
+ \lambda L_{-}^1 \Big[(u_1^n - u_0^n) 
+ A_{11}^{-1}A_{12} (v_1^n - v_0^n)\Big] = 0,\\[1mm]
&L_{+}^H XS v_0^{n+1} = 0. 
\end{align*}
Recall that $XS=H$ and $L_+^H$ is the left-unstable matrix for $H$. We have
\begin{align}
&\qquad L_{+}^H v_0^{n+1} = 0.\label{R-limscheme41} 
\end{align}
Besides, since $L_{-}^1 A_{11}^{-1} = (\Lambda_{-}^1)^{-1} L_{-}^1$, we obtain 
\begin{align}
& L_{-}^1 (u_0^{n+1} - u_0^n) 
+ \lambda L_{-}^1 \Big[A_{11}(u_1^n - u_0^n) 
+ A_{12} (v_1^n - v_0^n)\Big] = 0.\label{R-limscheme4}
\end{align}

Now we substitute the asymptotic limit \eqref{R-limit} into the limiting scheme \eqref{R-limscheme1}-\eqref{R-limscheme4}. Recall that $\bar{u}$ satisfies the equilibrium system $\bar{u}_t+A_{11}\bar{u}_x=0$. We substitute the relaxation limit \eqref{R-limit} into \eqref{R-limscheme1} and use the Taylor expansion to compute 
$$
\bar{u}^{n+1}_1 - \bar{u}^n_1 + \lambda A_{11}^+(\bar{u}^{n}_1-\bar{u}^n_{0}) + \lambda A_{11}^-
(\bar{u}^n_{2}-\bar{u}^{n}_1) =O\left(\tau^2\right).
$$
Similarly, for \eqref{R-limscheme2} and \eqref{R-limscheme4}, we compute:
\begin{align*}
&\bar{u}^{n+1}_j - \bar{u}^n_j + \lambda A_{11}^+(\bar{u}^{n}_j-\bar{u}^n_{j-1}) + \lambda A_{11}^-
(\bar{u}^n_{j+1}-\bar{u}^{n}_j) =O\left(\tau^2\right),\quad j\geq 2\\[2mm]
&L_{-}^1 (\bar{u}_0^{n+1} - \bar{u}_0^n) 
+ \lambda L_{-}^1 A_{11}(\bar{u}_1^n - \bar{u}_0^n)= O(\tau^2)+L_{-}^1 A_{11}^{-1}A_{12}(\nu_0^{n+1} - \nu_0^n) =O(\tau).
\end{align*}
Clearly, the limit satisfies the relation \eqref{R-limscheme3}. Furthermore, we know from Section \ref{sec:3.1} that $\bar{u}_0^{n+1}$ and $\nu_0^{n+1}$ satisfy $L_+^H\nu_0^{n+1}=0$ and $B_u\bar{u}_0^{n+1}+(B_v - B_uA_{11}^{-1}A_{12})\nu_0^{n+1} = b^{n+1}$. To sum up, we conclude that the scheme \eqref{R-limscheme1}-\eqref{R-limscheme4}, together with the boundary condition in \eqref{newscheme0}, is a first-order scheme for solving 
the limit \eqref{R-limit}.

\section{Interface problem}\label{Section4}
In this section, we show that our scheme can be also used to compute the interface problem:
\begin{align*}
    &U_t+AU_x=\frac{1}{\epsilon(x)}QU,\quad x\in \mathbb{R}.
\end{align*}
Here $A$, $Q$ are constant matrices which have the same partition as that in \eqref{partition}
and 
\begin{equation}\label{Interface-ep}
\epsilon(x)=1,\quad x<0,\qquad
\epsilon(x)=\epsilon_0\ll1,\quad x\geq0.
\end{equation}
Denote $U^l(x,t)=U(-x,t)$ for $x<0$ and $U^r(x,t)=U(x,t)$ for $x>0$. According to the analysis in \cite{MR4593212}, the interface condition at $x=0$ is $U^l(0,t)=U^r(0,t)$. Thus we rewrite the interface problem as the IBVP:
\begin{eqnarray}\label{InterfaceProblem}
    \left\{\begin{array}{l}
    \begin{pmatrix}
    U^l\\
    U^r
    \end{pmatrix}_t+
    \begin{pmatrix}
    -A &  \\
       & A
    \end{pmatrix}
    \begin{pmatrix}
    U^l\\
    U^r
    \end{pmatrix}_x=\begin{pmatrix}
    Q &  \\
       & Q/\epsilon_0
    \end{pmatrix}
    \begin{pmatrix}
    U^l\\ 
    U^r
    \end{pmatrix},\quad x>0,\\[5mm]
    ~(I,-I)\begin{pmatrix}
    U^l\\ 
    U^r
    \end{pmatrix}(0,t)=0.
    \end{array}\right.
\end{eqnarray}
We consider the following asymptotic solution for $U^r$:
$$
U^r_\epsilon = 
\begin{pmatrix}
    \bar{u}\\
    \bar{v}
\end{pmatrix}(x,t)+
\begin{pmatrix}
    \mu\\
    \nu
\end{pmatrix}(x/\epsilon,t).
$$
Here $\bar{u}$ satisfies the equilibrium system 
$\bar{u}_t+A_{11}\bar{u}_x=0$ and $(\mu,\nu)$ represents the boundary-layer correction. 
Using a similar derivation as that in Section \ref{Section3}, we know that  the asymptotic solution $U^r_{\epsilon}$ converges to
$$
U^r_{lim}(x,t)=\begin{pmatrix}
    \bar{u}\\
    0
\end{pmatrix}(x,t)+
\begin{pmatrix}
    -A_{11}^{-1}A_{12}\nu\\
    \nu
\end{pmatrix}(0,t).
$$
Moreover, $U^l$ satisfies the $\epsilon$-independent equation $U^l_t-AU^l_x=QU^l$ and the coupling condition $U^l(0,t)=U^r_{lim}(0,t)$ holds.
The solvability of unknowns $U^l$, $\bar{u}$ and $\nu$ has been shown in \cite{MR4593212}. Having these, we denote the relaxation limit on gird points $(x_j,t_n)$ by $U^l(x_j,t_n)$ and 
\begin{equation}\label{limit-interface}
U^r_{lim}(x_j,t_n)=\left\{\begin{array}{lc}
(\bar{u}_0^n-A_{11}^{-1}A_{12}\nu_0^n,~\nu_0^n)\quad  & j=0, \\[2mm]
(\bar{u}_j^n,~0)  \quad   & j\geq 1,
\end{array}\right.
\end{equation}
where $\bar{u}_j^n=\bar{u}(x_j,t_n)$ and $\nu_0^n=\nu(0,t_n)$. 

Now we turn to discuss the numerical scheme for the interface problem \eqref{InterfaceProblem}. Note the block-diagonal form of coefficient matrices in \eqref{InterfaceProblem}. We recall \eqref{RMS} and define the following matrix:
$$
R_\pm=
\begin{pmatrix}
R^{M_1}_\pm  & \\ 
& R^{M_2}_\pm
\end{pmatrix}:=
\begin{pmatrix}
R^{A}_\mp & \\ 
& R_{\pm}^\infty
\end{pmatrix}.
$$
Here $R_{\pm}^\infty$ is the matrix given in Lemma \ref{lem3.1}. Notice that we take $R^{M_1}_\pm=R^{A}_\mp$ because the coefficient matrix for the equation $U^l_t-AU^l_x=QU^l$ is $-A$. 
Furthermore, we denote $L_\pm^{M_1}$ and $L_\pm^{M_2}$ such that
$$
\begin{pmatrix}
L_+^{M_1}\\[1mm]
L_-^{M_1}
\end{pmatrix}
(R_+^{M_1},~R_-^{M_1}) = I,\quad
\begin{pmatrix}
L_+^{M_2}\\[1mm]
L_-^{M_2}
\end{pmatrix}
(R_+^{M_2},~R_-^{M_2}) = I.
$$
Then we consider the following scheme for the IBVP \eqref{InterfaceProblem}:
\begin{small}
\begin{align}
& (I- \tau Q)U^{l,n+1}_j = U^{l,n}_j - \lambda \Big[-AR^{M_1}_+L^{M_1}_+(U^{l,n}_j-U^{l,n}_{j-1}) - AR^{M_1}_-L^{M_1}_-(U^{l,n}_{j+1}-U^{l,n}_{j})\Big],\label{interface-eq1}\\[1mm]
& (I- \frac{\tau}{\epsilon_0} Q)U^{r,n+1}_j
 =U^{r,n}_j - \lambda \Big[AR^{M_2}_+L^{M_2}_+(U^{r,n}_j-U^{r,n}_{j-1}) + AR^{M_2}_-L^{M_2}_-(U^{r,n}_{j+1}-U^{r,n}_{j})\Big]\label{interface-eq2}
\end{align}
\end{small}
for $j\geq 1$ and 
\begin{small}
\begin{align}
    &L^{M_1}_-(-A)^{-1}(I- \tau Q)U^{l,n+1}_0 = L^{M_1}_-(-A)^{-1}U^{l,n}_0 - \lambda   L^{M_1}_- (U^{l,n}_{1}-U^{l,n}_0),\label{interface-eq3}\\[1mm]
    &L^{M_2}_-A^{-1}(I- \frac{\tau}{\epsilon_0} Q)U^{r,n+1}_0 = L^{M_2}_-A^{-1}U^{r,n}_0 - \lambda   L^{M_2}_- (U^{r,n}_{1}-U^{r,n}_0),\label{interface-eq4}\\
    &U^{l,n+1}_0 = U^{r,n+1}_0.\label{interface-eq5}
\end{align}
\end{small}
Clearly, \eqref{interface-eq1} and
\eqref{interface-eq3} are just the first-order upwind scheme for solving the equation $U^l_t-AU^l_x=QU^l$. 
On the other hand, by using the same argument as that in Section \ref{section3.2}, it is not difficult to check that the limiting scheme of \eqref{interface-eq2} and \eqref{interface-eq4} is first-order in solving the asymptotic limit $U^r_{lim}$ in \eqref{limit-interface}.

At last, we rewrite \eqref{interface-eq1}---\eqref{interface-eq5} as the scheme for the original interface problem on $\mathbb{R}$.
Denote the grid $x_j=jh$ for $j=0,\pm 1,\pm 2,\cdots$ and take
$$
U_{j}^n= \left\{
\begin{array}{lc}
   U_{-j}^{l,n}  & j\leq0, \\[3mm]
   U_{j}^{r,n}  & j>0.
\end{array}\right.
$$
Define 
$\epsilon^r_j=\epsilon(x^+_j)$ and $\epsilon^l_j=\epsilon(x^-_j)$, which are right and left limits of $\epsilon(x)$ at $x=x_j$. Clearly, we have 
$$
\epsilon^r_j= \left\{
\begin{array}{lc}
   1  & j<0, \\[1mm]
   \epsilon_0  & j\geq0,
\end{array}\right.\qquad 
\epsilon^l_j= \left\{
\begin{array}{lc}
   1  & j\leq0, \\[1mm]
   \epsilon_0  & j>0.
\end{array}\right.
$$
Correspondingly, we also define the left stable (unstable) matrices by  
$$
L^{r,j}_{-}= \left\{
\begin{array}{lc}
   L^{M_1}_{+}  & j<0, \\[3mm]
   L^{M_2}_{-}  & j\geq0,
\end{array}\right.\qquad 
L^{l,j}_{+}= \left\{
\begin{array}{lc}
   L^{M_1}_{-}  & j\leq0, \\[3mm]
   L^{M_2}_{+}  & j>0.
\end{array}\right.
$$
Then the scheme \eqref{interface-eq1}---\eqref{interface-eq5} can be rewritten as
\begin{align}
&L^{r,j}_-A^{-1}\Big(I-\dfrac{\tau}{\epsilon_j^r}Q\Big)U^{n+1}_j
 = L^{r,j}_- A^{-1} U^{n}_j - \lambda L^{r,j}_- (U^{n}_{j+1}-U^{n}_{j}), \label{interface-newform1}\\
&L^{l,j}_+ A^{-1}\Big(I-\dfrac{\tau}{\epsilon_j^l}Q\Big)U^{n+1}_j
 = L^{l,j}_+ A^{-1} U^{n}_j - \lambda L^{l,j}_+ (U^{n}_{j}-U^{n}_{j-1}). \label{interface-newform2}
\end{align}
This form is equivalent to the scheme 
\eqref{interface-eq1}---\eqref{interface-eq5} for the interface problem with $\epsilon(x)$ given in \eqref{Interface-ep}. In application, the relaxation parameter $\epsilon(x)$ can be also given in more complicated way. In such cases, it is easier to extend our scheme by using this form.
Namely, we first compute $\epsilon^r_j=\epsilon(x^+_j)$ and $\epsilon^l_j=\epsilon(x^-_j)$ at each point $x_j$. Then we take
$$
L^{r,j}_{-}=\left\{\begin{array}{lc}
L_{-}^A & \quad \epsilon_j^r>\tau, \\[1mm]
L_{-}^\infty & \quad \epsilon_j^r\leq\tau,
\end{array}\right.\qquad 
L^{l,j}_{+}=\left\{\begin{array}{lc}
L_{+}^A & \quad\epsilon_j^l>\tau, \\[1mm]
L_{+}^\infty & \quad \epsilon_j^l\leq\tau.
\end{array}\right.
$$ 
Having these, we use  \eqref{interface-newform1}-\eqref{interface-newform2} to update $U^{n+1}_j$ for each $j=0,\pm 1,\pm 2,\cdots$.

\section{Numerical experiment}\label{Section5}
~

\textbf{Example 1 (Linear Jin-Xin model):} 
In this numerical experiment, we check the validity of the boundary AP scheme by showing the convergence rate. 
We consider the Jin-Xin model \eqref{JX-sys}-\eqref{JX-BC} with linear function $f(u)=au$. Particularly, we consider three cases.

Case 1: We first consider the case with $a=-0.5<0$. In this case, there is boundary-layer for sufficiently small $\epsilon$. In order to satisfy the assumption $\epsilon \ll \tau$, we take $\epsilon=10^{-9}$.
The initial data for \eqref{JX-sys} are given by 
$u(x,0) = 2\sin(x)$ and $v(x,0) = -\sin(x)$. 
The boundary condition \eqref{JX-BC} is given by 
$
u(0,t)+v(0,t) = \sin(t/2)+\sin(t).
$
Since $\epsilon$ is sufficiently small, we take the asymptotic solution 
$$
u_{\epsilon}(x,t) = 2\sin(x+t/2)+\sin(t)\exp(-2x/\epsilon),\quad v_{\epsilon}(x,t) = -\sin(x+t/2)
$$
as the reference solution.
The computation domain is $x\in[0,2]$. The solution is computed to $t=0.5$. We take the CFL-number as $C_{CFL}=0.8$ and the parameter $p$ in \eqref{M} as $p=2$. We compute the $L^2$, $L^1$ and $L^\infty$ errors between the numerical solutions $(u,v)$ and reference solutions $(u_{\epsilon},v_{\epsilon})$. The result shown in Figure \ref{fig:JX-convergence} (left) implies that our scheme has the first-order accuracy even for the stiff case with boundary-layer.

Case 2:
We take $a= 0.5>0$. There is no boundary-layer in this case. The initial data are given by 
$u(x,0) = 2\sin(x)$ and $v(x,0) = \sin(x).$
The boundary condition is given by 
$
u(0,t)+v(0,t) = -3\sin(t/2).
$
The asymptotic solution reads as 
$$
u_{\epsilon}(x,t) = 2\sin(x-t/2),\quad v_{\epsilon}(x,t) = \sin(x-t/2).
$$
The other computational parameters are the same as the case 1 and the numerical result is shown in Figure \ref{fig:JX-convergence} (middle). Similarly, the scheme has first-order accuracy.

\begin{figure}[h]
\centering
\includegraphics[width=5.0in]{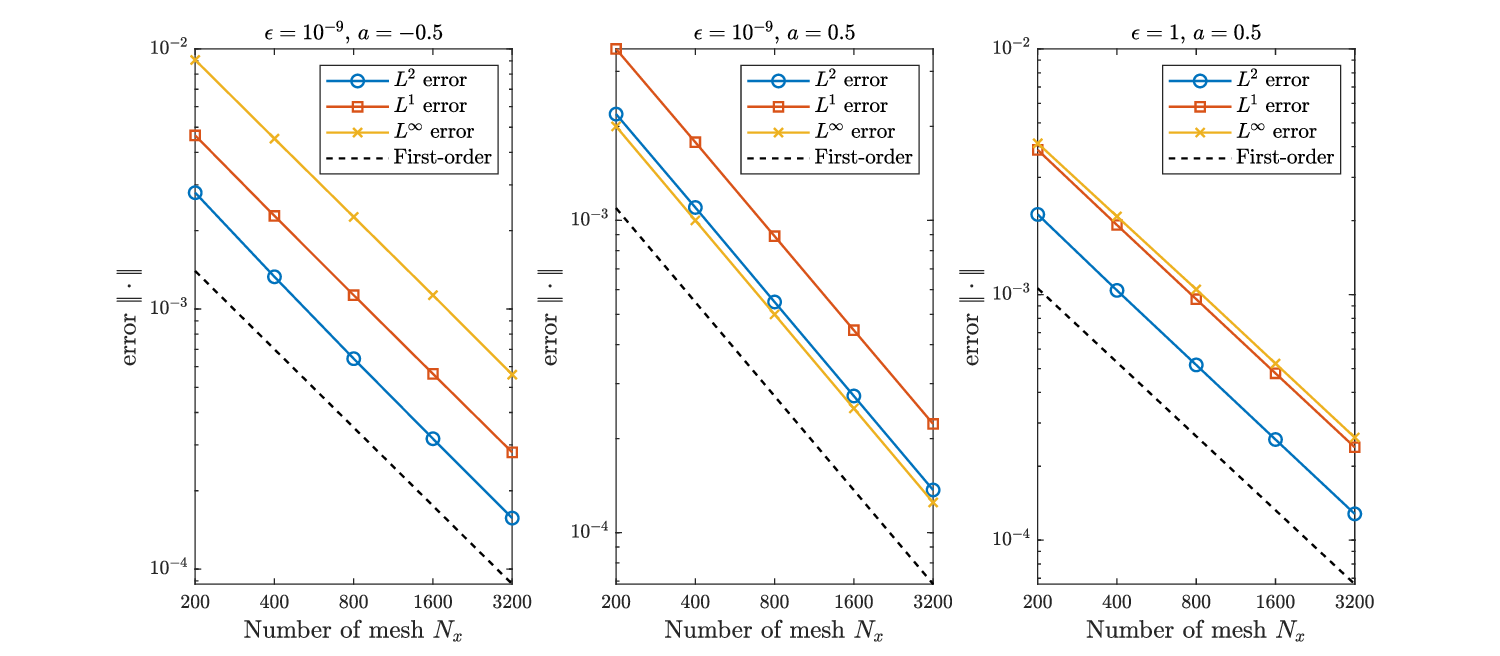}
\caption{$L^2$, $L^1$ and $L^\infty$ errors between the numerical solutions and reference solutions for different cases: (left) the stiff case with boundary-layer; (middle) the stiff case without boundary-layer; (right) the non-stiff case.} 
\label{fig:JX-convergence}
\end{figure}



Case 3:
We consider the non-stiff case with $\epsilon=1$. 
In order to easily get the analytical solution, we consider the following Jin-Xin model with force term:
\begin{align*}
&u_t + v_x = 0,\\[1mm]
&v_t + u_x = 0.5u-v +f,
\end{align*}
where $f=-1.5\sin(x+t)$.
The initial condition is given by $u(x,0) = \sin(x), ~v(x,0) = -\sin(x)$ and the boundary condition is $u(0,t)+v(0,t) = 0$. The exact solution reads 
$u_e(x,t) = \sin(x+t)$ and $v_e(x,t)=-\sin(x+t).$
We directly take this as the reference solution and show the convergence rate in Figure \ref{fig:JX-convergence} (right). It implies that the scheme is a first-order scheme for the non-stiff relaxation system with $\epsilon=1$.

\textbf{Example 2 (Nonlinear Jin-Xin model):}
In this experiment, we exploit the method in Section \ref{Section2} and show the result for nonlinear Jin-Xin model. Moreover, we compare our result with the one computed from the original upwind scheme.

We consider the Jin-Xin model \eqref{JX-sys} for the nonlinear case with $f(u)=\frac{1}{4}(e^{-u}-1)$. The initial condition is given by $u(x,0) = \sin^3(\pi x)$ and $v(x,0) = f(u(x,0))$. The boundary condition reads $u(0,t)+v(0,t) = \sin(2t)$.
We divide the computation domain $x\in[0,1]$ into $N_x=800$ cells. Besides, we take $\tau=5\times 10^{-4}$, $\epsilon=10^{-6}$ and $p=2$. The reference solution is taken as the asymptotic solution \eqref{JX-asy}, $\bar{u}$ is computed numerically from the equilibrium system, and $\mu(0,t)$ is computed from the boundary condition. The numerical result is shown for $t=0.2$ in Figure \ref{fig:1} (left). In comparison, we also plot the result by using the original upwind scheme, see Figure \ref{fig:1} (right). From the figures, we see that the upwind scheme can not capture the correct value at the boundary point. In our method, the boundary-layer is well captured even the mesh is relative coarse ($h=1.25\times 10^{-3}$ and $\epsilon=10^{-6}$).

\begin{figure}[h]
\centering
\includegraphics[width=2.5in]{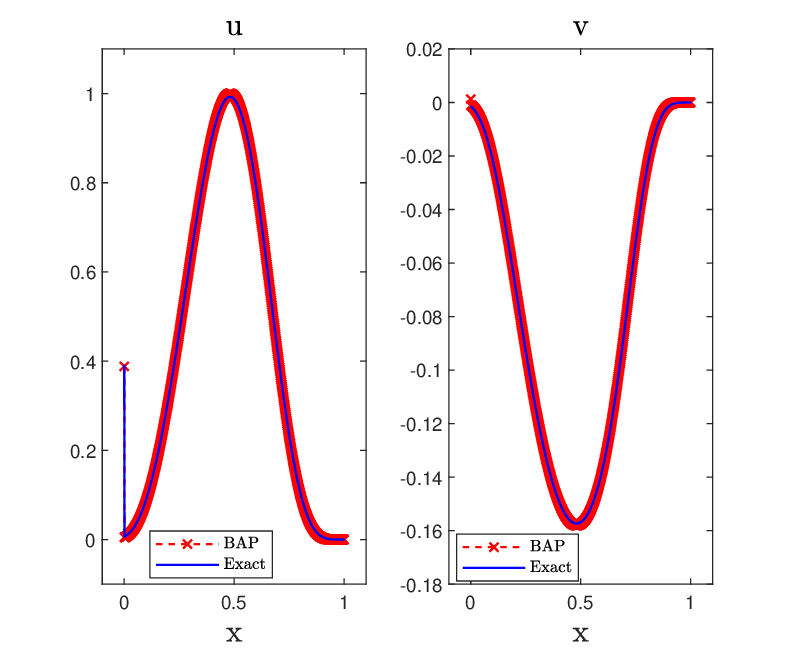}
\includegraphics[width=2.5in]{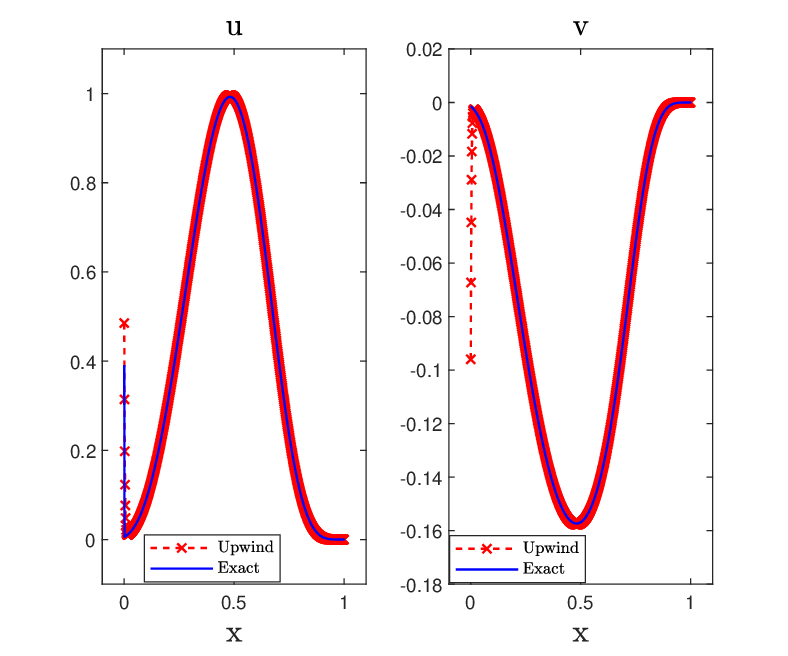}
\caption{(Left) the numerical results at $t=0.2$ with boundary-AP treatment; (Right) the numerical results at $t=0.2$ by upwind scheme.} 
\label{fig:1}
\end{figure}



\textbf{Example 3 (A linear relaxation model):}
Consider the relaxation system
\begin{align*}
&
\left\{\begin{array}{l}
\partial_t u+\partial_x v =0, \\[3mm]
\partial_t v+\partial_x (u+p) =0, \\[1mm]
\partial_t p+\partial_x (v+p) =-\dfrac{1}{\epsilon}p,
\end{array}\right.\\[2mm]
\text{boundary condition:}\quad &u(0,t)=-\sin t,\quad v(0,t)+2p(0,t)=0,\\[2mm]
\text{initial condition:}\quad&u(x,0)=2\sin x,\quad 
v(x,0)\equiv0,\quad p(x,0)\equiv0.
\end{align*}
We build this artificial example to illustrate the necessity of the boundary-AP treatment. It is not difficult to check that this model satisfies the structural stability condition in \cite{MR1693210}. Moreover, one can check that the boundary condition satisfies the strictly dissipative condition and thereby the GKC \cite{MR1693210}. These imply the existence of zero-relaxation limit for the IBVP.

For sufficiently small $\epsilon=10^{-6}$, we consider the following asymptotic solution as the reference solution 
\begin{eqnarray*}
\left\{
\begin{array}{l}
u_\epsilon=\sin(x-t)+\sin(x+t)-\sin t\exp\left(-x/\epsilon\right), \\[2mm]
v_\epsilon=\sin(x-t)-\sin(x+t),\quad 
p_\epsilon=\sin t\exp\left(-x/\epsilon\right).
\end{array}\right.
\end{eqnarray*}
The computation domain $x\in[0,0.5]$ is divided into $N_x=100$ cells. We use our method to compute the numerical solution with  $\tau=10^{-3}$. Besides, we also use the classical upwind scheme with the same parameters. The numerical result is shown for $t=0.3$ in Figure \ref{fig:3}. To exhibit the result clearly, we only plot the solution on a subset of the grid points (including boundary point $x=0$). From the figures, we see that the upwind scheme can not capture the right boundary value and the error may spread as time evolves. In our method, the boundary-layer is well captured and the numerical results match the reference solution well.
\begin{figure}[h]
\centering
\includegraphics[width=5.0in]{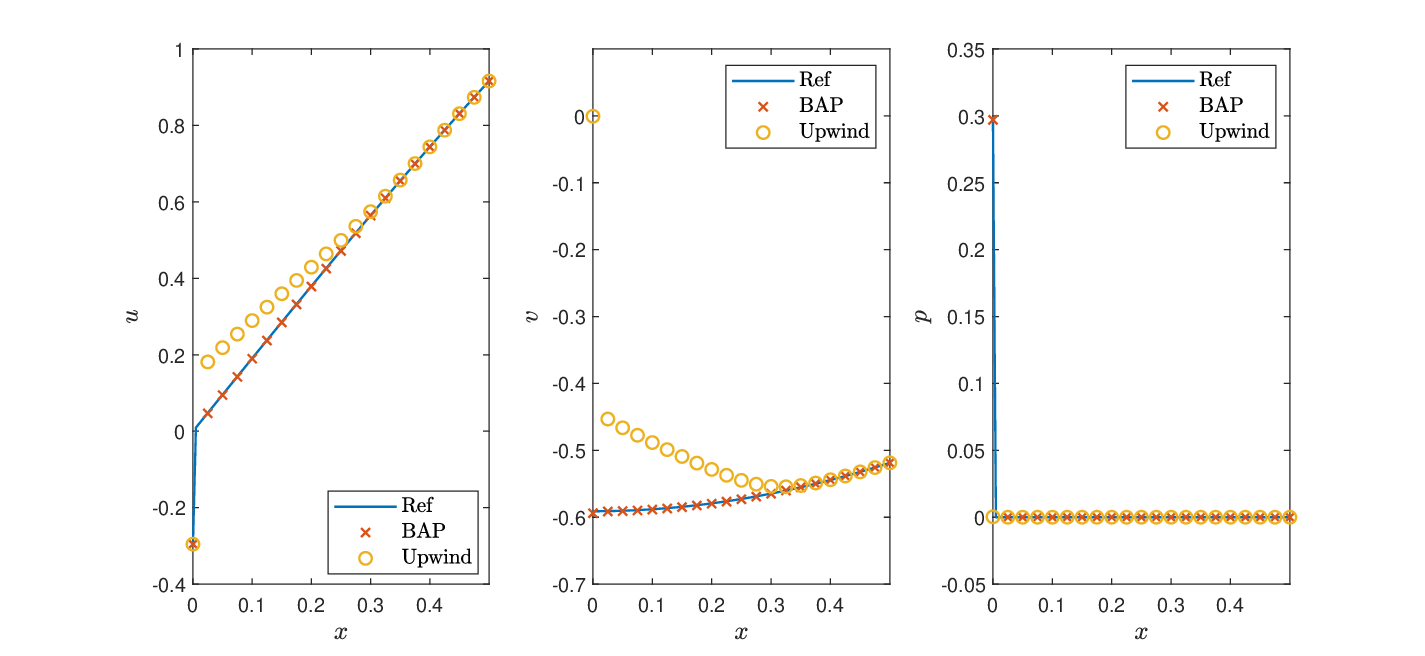}
\caption{The numerical results at $t=0.3$ by our boundary-AP method and by the classical upwind method.} 
\label{fig:3}
\end{figure}

\textbf{Example 4 (Interface problem for the Jin-Xin model):}
In this experiment, we consider the Jin-Xin model 
\begin{align*}
&\left\{\begin{array}{l}
u_t + v_x = 0,\\[2mm]
v_t + u_x = (f(u)-v)/\epsilon(x),
\end{array}
\right. \qquad x\in \mathbb{R},~t>0, 
\end{align*} with $f(u)=\frac{1}{4}(e^{-u}-1)$. The relaxation parameter $\epsilon(x)$ is given by \eqref{Interface-ep} with $\epsilon_0=10^{-4}$. The initial data are given by 
$u(x,0) = \sin(\pi x)$ and $v(x,0) = f(\sin(\pi x)).$
We divide the computation domain  $x\in[-1,1]$ into $N_x$ cells. In our method, we take $p=4$. The reference solution is computed by upwind scheme with sufficiently fine mesh $h=5\times 10^{-5}$. The numerical result is shown for $t=0.4$ in Figure \ref{fig:4}. The left and middle figures are the results computed by the original upwind scheme and by our method with $N_x=100$. From these two figures, we see that our method has better performance than the upwind scheme. For the classical upwind scheme, the error seems smaller if the mesh size $h$ is reduced (but still larger than $\epsilon$). However, we observe that the upwind scheme may lead to an incorrect spike near the interface even with a fine mesh. In Figure \ref{fig:4} (right), we plot the numerical result of $v$ by using the upwind scheme with $N_x=1000$. This phenomenon is similar to the incorrect boundary value in Example 2.

\begin{figure}[h]
\centering
\includegraphics[width=5.0in]{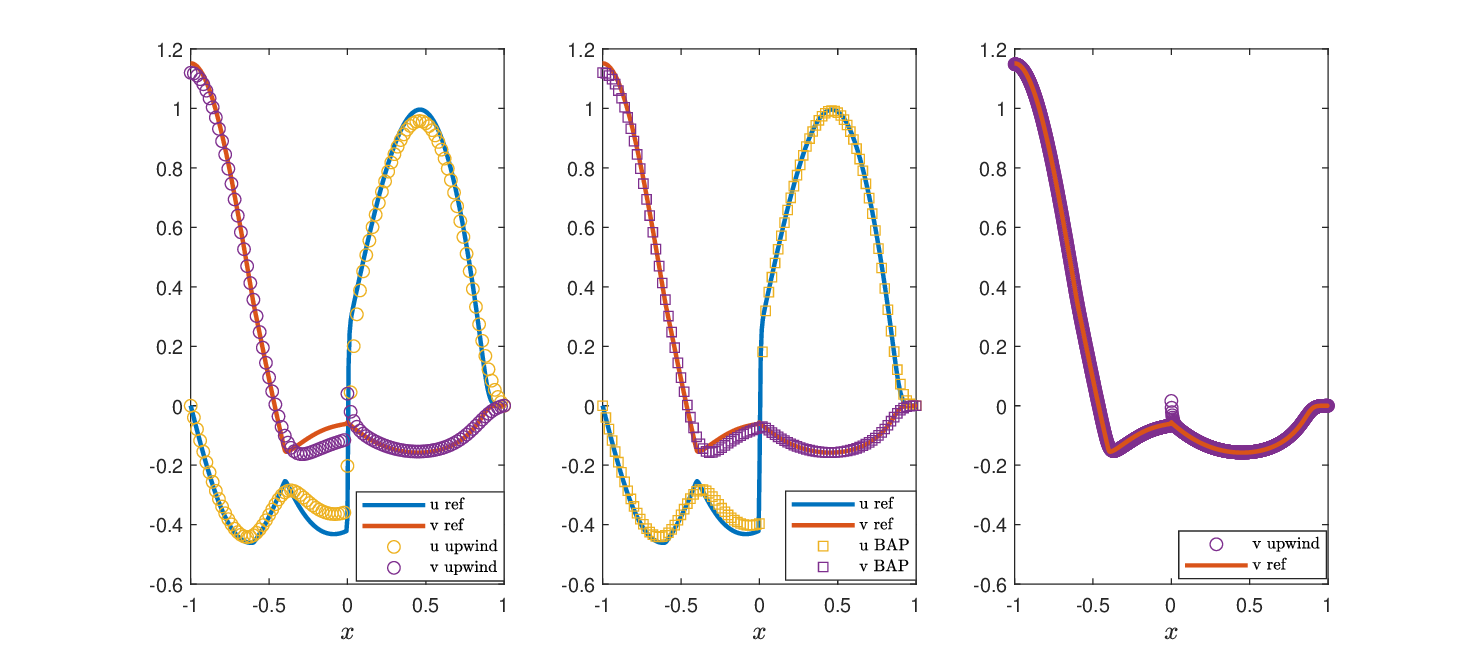}
\caption{(left) the numerical results  by the upwind scheme with $N_x=100$; (middle) the results by our boundary-AP method with $N_x=100$; (right) the results by the upwind scheme with $N_x=1000$.} 
\label{fig:4}
\end{figure}

\textbf{Example 5 (Interface problem for a linear relaxation model):}
In this example, we also build an artificial example to illustrate the necessity of the boundary-AP treatment in dealing with the interface problem.
Consider the relaxation system
\begin{align*}
&
\left\{\begin{array}{l}
\partial_t u-\partial_x u+\partial_x v =0, \\[2mm]
\partial_t v+\partial_x u+\partial_x p  =-\dfrac{1}{\epsilon(x)}v, \\[2mm]
\partial_t p+\partial_x v =-\dfrac{1}{\epsilon(x)}p,
\end{array}\right.\\[2mm]
\text{(initial condition)}\quad&u(x,0)=\sin(\pi x),\quad 
v(x,0)\equiv0,\quad p(x,0)\equiv0.
\end{align*}
The relaxation parameter $\epsilon(x)$ is given by \eqref{Interface-ep} with $\epsilon_0=10^{-6}$. We take the numerical solution solved by the domain decomposition method \cite{MR4593212} as the reference solution. Namely, we solve the relaxation system with $\epsilon=1$ on the left side and solve the equilibrium system $\partial_tu-\partial_xu=0$ on the right side. 

In this example, we take very fine mesh. Namely, we divide the computation domain  $x\in[-1,1]$ into $N_x=2000$ cell. Besides, we also use the classical upwind scheme with the same parameters. The numerical result is shown for $t=0.6$ in Figure \ref{fig:5}. From the figures, we see that the numerical results by our method match the reference solution well. On the other hand, the upwind scheme can not capture the correct interface value and the error may spread. Moreover, we also observe this phenomenon by using the first-order Lax-Friedrichs scheme. The figure is quite similar to that in Figure \ref{fig:5} (right) and we omit it here.
\begin{figure}[h]
\centering
\includegraphics[width=5.0in]{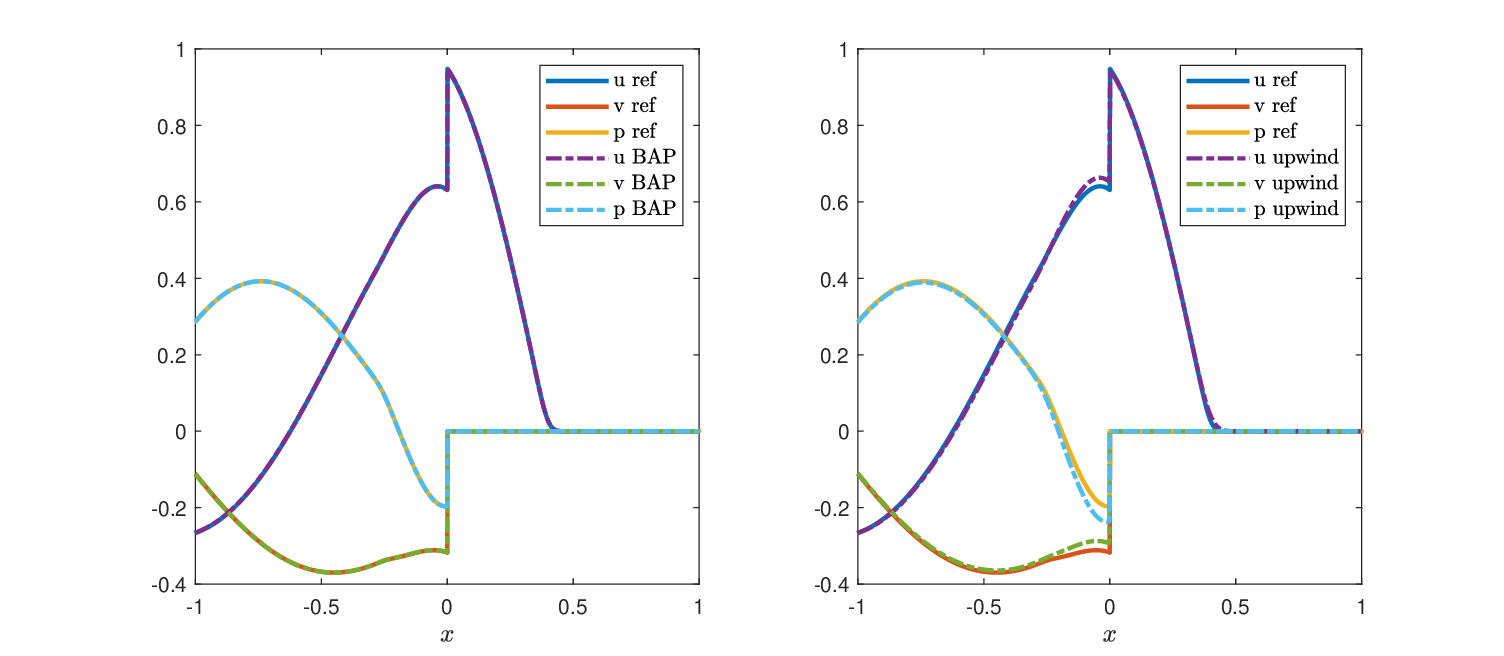}
\caption{The numerical results by our boundary-AP method (left) and by the classical upwind method (right).} 
\label{fig:5}
\end{figure}

\section*{\normalsize{Acknowledgments}} 
The author is funded by Alexander von Humboldt Foundation (Humboldt Research Fellowship Programme for Postdocs). The author would like to thank Prof. Shi Jin for helpful discussion and encouraging him to consider the interface problem.


\begin{appendix}
\section{Missing Proof}\label{AppendA}
\begin{proof}[Proof of Lemma \ref{lem2.1}]
By the definition of $M(\eta)$, we directly compute the eigenvalues 
\begin{small}
$$
\lambda_- = -\frac{1}{2} \Big(a\eta+\sqrt{a^2\eta^2+4(\eta+1)}\Big)<0,\quad \lambda_+ = -\frac{1}{2} \Big(a\eta-\sqrt{a^2\eta^2+4(\eta+1)}\Big)>0
$$
\end{small}
and eigenvectors
\begin{small}
$$
L_-^M = \frac{1}{\sqrt{(1+\eta)^2+\lambda_-^2}}\begin{pmatrix}
\lambda_- &
1+\eta
\end{pmatrix},
\quad L_+^M = \frac{1}{\sqrt{(1+\eta)^2+\lambda_+^2}}\begin{pmatrix}
\lambda_+ &
1+\eta
\end{pmatrix}.
$$
\end{small}
For sufficiently small $\eta$, we have $\lambda_-=-1+O(\eta)$, $\lambda_+=1+O(\eta)$ and 
\begin{small}
$$
L_-^M = \frac{1}{\sqrt{2}}\begin{pmatrix}
-1 & 
1
\end{pmatrix} + O(\eta),\quad 
L_+^M = \frac{1}{\sqrt{2}}\begin{pmatrix}
1 &
1
\end{pmatrix} + O(\eta).
$$
\end{small}

For sufficiently large $\eta$, we discuss according to the sign of $a$. For $a<0$ we compute
\begin{small}
$$
\lambda_- = \frac{2(\eta+1)}{a\eta-\sqrt{a^2\eta^2+4(\eta+1)}}
 = \frac{2(\eta^{-1}+1)}{a-\sqrt{a^2+4\eta^{-1}(1+\eta^{-1})}} = \frac{1}{a}+O(\eta^{-1})
$$
\end{small}
and get
\begin{small}
$$
L_-^M = 
\frac{1}{\sqrt{(1+\eta)^2+\lambda_-^2}}\begin{pmatrix}
\lambda_- &
1+\eta
\end{pmatrix}=
\begin{pmatrix}
0 &
1
\end{pmatrix}+O(\eta^{-1}).
$$
\end{small}
Besides, we compute
\begin{small}
$$
\eta^{-1} \lambda_+  = -\frac{1}{2} \Big(a-\sqrt{a^2+4(\eta^{-1}+\eta^{-2})}\Big) = -a +O(\eta^{-1})
$$
\end{small}
and
\begin{small}
$$
L_+^M = \frac{1}{\sqrt{(1+\eta^{-1})^2+(\eta^{-1}\lambda_+)^2}}\begin{pmatrix}
\eta^{-1} \lambda_+ &
1+\eta^{-1}
\end{pmatrix} = \frac{1}{\sqrt{1+a^2}}\begin{pmatrix}
-a&1
\end{pmatrix}+O(\eta^{-1}).
$$
\end{small}
Notice that $\sqrt{1+a^2}L_+^M$ is also an eigenvector of $M$. 
On the other hand, for $a>0$ we compute
\begin{small}
$$
\lambda_+ = \frac{2(\eta+1)}{a\eta+\sqrt{a^2\eta^2+4(\eta+1)}}
 = \frac{2(\eta^{-1}+1)}{a+\sqrt{a^2+4\eta^{-1}(1+\eta^{-1})}} = \frac{1}{a}+O(\eta^{-1})
$$
\end{small}
and 
\begin{small}
$$
\eta^{-1} \lambda_- =-\frac{1}{2} \Big(a+\sqrt{a^2+4(\eta^{-1}+\eta^{-2})}\Big) = -a+O(\eta^{-1}).
$$ 
\end{small}
Thus we have
\begin{small}
$$
L_-^M = \frac{1}{\sqrt{(1+\eta^{-1})^2+(\eta^{-1}\lambda_-)^2}}\begin{pmatrix}
\eta^{-1}\lambda_- &
1+\eta^{-1}
\end{pmatrix}=\frac{1}{\sqrt{1+a^2}}\begin{pmatrix}
-a&1
\end{pmatrix}+O(\eta^{-1})
$$
\end{small}
and
\begin{small}
$$
L_+^M = \frac{1}{\sqrt{(1+\eta)^2+\lambda_+^2}}\begin{pmatrix}
\lambda_+ &
1+\eta
\end{pmatrix} = \begin{pmatrix}
0 &
1
\end{pmatrix}+O(\eta^{-1}).
$$
\end{small}
This completes the proof.
\end{proof}

\end{appendix}


\end{document}